\xspace \usepackage{xspace} \fi
\newcommand{\nc}{\newcommand}
\renewcommand{\frak}{\mathfrak}
\providecommand{\cal}{\mathcal}
\renewcommand{\bold}{\mathbf}
\numberwithin{equation}{section}
\newcommand{\pfname}{Proof.}
\newtheorem{thm}{Theorem}[section]
\newtheorem{cor}[thm]{Corollary}%[subsection]
\newtheorem{proposition}[thm]{Proposition}%[subsection]
\newtheorem{lemma}[thm]{Lemma} %[subsection]
\theoremstyle{definition}
\newtheorem{definition}[thm]{Definition}%[subsection]
\theoremstyle{definition}
\newtheorem{remark}[thm]{Remark}%[subsection]
\nc{\Theorem}[1]{Theorem~{#1}}
\nc{\Th}[1]{({\sl Th.}~#1)}
\nc{\Thd}[2]{({\sl Th.}~{#1} {#2})}
\nc{\Theorems}[2]{Theorems~{#1} and ~{#2}}
\nc{\Thms}[2]{({\it Thms. ~{#1} and ~{#2}})}
\nc{\Lemmas}[2]{Lemma~{#1} and ~{#2}}
\nc{\manga}[6]{({\it Thms. ~ #1, ~ #2, ~ #3,\\ ~ #4, ~ #5, ~ #6})}
\nc{\Prop}[1]{({\sl Prop.}~{#1})}
\nc{\Proposition}[1]{Proposition~{#1}}
\nc{\Propositions}[2]{Propositions~{#1} and ~{#2}}
\nc{\Props}[2]{({\sl Props.}~{#1} and ~{#1})}
\nc{\Cor}[1]{({\sl Cor.}~{#1})}
\nc{\Corollary}[1]{Corollary~{#1}}
\nc{\Corollaries}[2]{Corollaries~{#1} and ~{#2}}
\nc{\Definition}[1]{Definition~{#1}}
\nc{\Defn}[1]{({\sl Def.}~{#1})}
\nc{\Lemma}[1]{Lemma~{#1}} 
\nc{\Lem}[1]{({\sl Lem.} ~{#1})} 
\nc{\Eq}[1]{equation~({#1})}
\nc{\Equation}[1]{Equation~({#1})}
\nc{\Section}[1]{Section~{#1}}
\nc{\Sections}[1]{Sections~{#1}}
\nc{\Sec}[1]{({\sl Sec.} ~{#1})} 
\nc{\Chapter}[1]{Chapter~{#1}}
\nc{\Chapt}[1]{({\sl Ch.}~{#1})}
\nc{\Ex}[1]{{\sl Ex.}~{#1}}
\nc{\Exa}[1]{{\sl Example}~{#1}}
\nc{\Example}[1]{{\sl Example}~{#1}}
\nc{\Examples}[1]{{\sl Examples}~{#1}}
\nc{\Exercise}[1]{{\sl Exercise}~{#1}}
\nc{\Rem}[1]{({\sl Rem.}~{#1})}
\nc{\Remark}[1]{{\sl Remark}~{#1}}
\nc{\Remarks}[1]{{\sl Remarks}~{#1}}
\nc{\Note}[1]{{\sl Note}~{#1}}
\nc{\Conjecture}[1]{Conjecture~{#1}}
\nc{\Claim}[1]{Claim~{#1}}
\nc \Proof{{  \it Proof. }}
\nc{\xmu}{\mu}
\nc{\w}{\omega}
\nc \Ab{{\ensuremath{\bold A}}}
\nc \ab{{\ensuremath{\bold a}}}
\nc \bb{{\ensuremath{\bold b}}}
\nc \cb{{\ensuremath{\bold c}}}
\nc \Bb{{\ensuremath{\bold B}}}
\nc \Gb{{\ensuremath{\bold G}}}
\nc \Qb{{\ensuremath{\bold Q}}}
\nc \Rb{{\ensuremath{\bold R}}} \nc \Cb{{\ensuremath{\bold C}}} 
\nc \Eb{{\ensuremath{\bold E}}}
\nc \eb{{\ensuremath{\bold e}}}
\nc \Db{{\ensuremath{\bold D}}}
\nc \Fb{{\ensuremath{\bold F}}}
\nc \ib{{\ensuremath{\bold i}}}
\nc \jb{{\ensuremath{\bold j}}}
\nc \kb{{\ensuremath{\bold k}}}
\nc \nb{{\ensuremath{\bold n}}}
\nc \rb{{\ensuremath{\bold r}}}
\nc \Pb{{\ensuremath{\bold P}}}
\nc \pb{{\ensuremath{\bold p}}}
\nc \SPb{{\ensuremath{\bold {SP}}}}
\nc \Zb{{\ensuremath{\bold Z}}} 
\nc \zb{{\ensuremath{\bold z}}} 
\nc \gb{{\ensuremath{\bold g}}} 
\nc \fb{{\ensuremath{\bold f}}} 
\nc \ub{{\ensuremath{\bold u}}} 
\nc \vb{{\ensuremath{\bold v}}} 
\nc \yb{{\ensuremath{\bold y}}} 
\nc \xb{{\ensuremath{\bold x}}} 
\nc \xib{{\ensuremath{\bold \xi}}} 
\nc \Nb{{\ensuremath{\bold N}}} 
\nc \Hb{{\ensuremath{\bold H}}} 
\nc \wb{{\ensuremath{\bold w}}} 
\nc \Wb{{\ensuremath{\bold W}}} 
\nc \syz{{\mathbf {syz}}}
\nc \bnoll{{\ensuremath{\bold 0}}} 
\nc \mf{\frak m} \nc \mh{\hat{\m}} 
\nc \nf{\frak n}
\nc \Of{\frak O}
\nc \rf{\frak r}
\nc \mufr{{\mathbf \mu}}
\nc \hf{\frak h} 
\nc \qf{\frak q} 
\nc \bfr{\frak b} 
\nc \kfr{\frak k} 
\nc \pfr{\frak p} 
\nc \af{\frak a }
\nc \cf{\frak c }
\nc \sfr{\frak s} 
\nc \ufr{\frak u} 
\nc \g{\frak g} 
\nc \gA{\g_{\Ao}} 
\nc \lfr{\frak l}
\nc \afr{\frak a}
\nc \gfh{\hat {\frak g}}
\nc \gl{\frak { gl }}
\nc \Sl{\frak {sl}}
\nc \SU{\frak {SU}}
\nc{\Homf}{\frak{Hom}}
\newcommand{\on}{\operatorname}
\nc\hankel{\on {Hankel}}
\nc\row{\on {row\ }}
\nc\nullity{\on {nullity }}
\nc\col{\on {col\ }}
\nc\rowm{\on {Row \ }}
\nc\loc{\on {lc \ }}
\nc\nullo{\on {null\ }}
\nc\Nul{\on {Nul\ }}
\nc \Ann {\on {Ann }}
\nc \Ass {\on {Ass \ }}
\nc \Coker {\on {Coker}}
\nc \Co{\on C}
\nc \Homo{\on {Hom}}
\nc \Ker {\on {Ker}}
\nc \omod{\on {mod}}
\nc \No {\on N}
\nc \NN {\on {NN}}
\nc \NGo {\on {NG}}
\nc \Oo {\on O}
\nc \ch {\on {ch}}
\nc \rko {\on {rk}}
\nc \Sing {\on {Sing\ }}
\nc \Reg {\on {Reg}}
\nc \CoI {\on {CI}}
\nc \CoM {\on {CM}}
\nc \Gor {\on {Gor}}
\nc \Type {\on {Type}}
\nc \can {\on {can}}
\nc \Top {\on {T}}
\nc \Tr {\on {Tr}}
\nc \rel {\on {rel}}
\nc \tr {\on {tr}}
\nc \sgn {\on {sgn }}
\nc \trdeg {\on {tr.deg}}
\nc \codim {\on {codim }}
\nc \coht {\on {coht}}
\nc \divo {\on {div \ }}
\nc \coh {\on {coh}}
\nc \Clo {\on {Cl}}
\nc \embdim{\on {embdim}}
\nc \embcodim{\on {embcodim \ }}
\nc \qcoh {\on {qcoh}}
\nc \grad {\on {grad}\ }
\nc \grade {\on {grade}}
\nc \hto {\on {ht}}
\nc \depth {\on {depth}}
\nc \prof {\on {prof}}
\nc \reso{\on {res}}
\nc \ind{\on {ind}}
\nc \prodo{\on {prod}}
\nc \coind{\on {coind}}
\nc \Con{\on {Con}}
\nc \Crit{\on {Crit}}
\nc \Der{\on {Der}}
\nc \Char{\on {Char}}
\nc \Ch{\on {Ch}}
\nc \Ext{\on {Ext}}
\nc \Eo{\on {E}}
\nc \End{\on {End}}
\nc \ad{\on {ad}}
\nc \Ad{\on {Ad}}
\nc \gr{\on {gr}}
\nc \Fo{\on {F}}
\nc \Gr{\on {Gr}}
\nc \Go{\on {G}}
\nc \GFo{\on {GF}}
\nc \Glo{\on {Gl}}
\nc \Ho{\on {H}}
\nc \CMo{\on {\CM}}
\nc \SCM{\on {SCM}}
\nc \hol{\on {hol}}
\nc{\sgd}{\on{sgd}}
\nc \supp{\on {supp}}
\nc \ssupp{\on {s-supp}}
\nc \singsupp{\on {singsupp}}
\nc \msupp{\on {msupp}}
\nc \spec{\on {spec}}
\nc \spano{\on {span }}
\nc \Span{\on {Span }}
\nc \Max{\on {Max}}
\nc \Min{\on {Min}}
\nc \Mod{\on {Mod}}
\nc \Rad {\on {Rad}}
\nc \rad {\on {rad}}
\nc \rank {\on {rank}}
\nc \range {\on {range}}
\nc \Slo{\on {SL}}
\nc \soc {\on {soc}}
\nc \Irr {\on {Irr}}
\nc \Imo {\on {Im}}
\nc \SSo{\on {SS}}
\nc \lub{\on {lub}}
\nc \gldim{\on {gl.d.}}
\nc \pdo{\on {p.d.}} 
\nc \ido{\on {i.d.}} 
\nc \dSSo{\dot {\SSo}}
\nc \So{\on S}
\nc \Io{\on I}
\nc \Jo{\on J}
\nc \jo{\on j}
\nc \Ko{\on K}
\nc \PBW{\Ac_{PBW}}
\nc \Ro{\on R}
\nc \To{\on T}
\nc \Ao{\on A}
\nc \Do{{\on D}}
\nc \Bo{\on B}
\nc \Po{\on P}
\nc \Qo{\on Q}
\nc \Zo{\on Z}
\nc \U{\on U}
\nc \wt{\on {wt}}
\nc \Uh{\hat {\U}}
\nc \T{\on T}
\nc \Lo{\on L}
\nc{\dop}{\on d}
\nc{\eo}{\on e}
\nc{\ado}{\on{ad}}
\nc{\Tot}{\on{Tot}}
\nc{\Aut}{\on{Aut}}
\nc{\sinc}{\on {sinc}}
\nc{\overrightleftarrows}[2]{\overset{#1}{\underset{#2}{\rightleftarrows}}}
\nc{\CCF}{\cal{CF}}
\nc{\CDF}{\cal{DF}}
\nc{\CHC}{\check{\cal C}}
\nc{\Cone}{\on{Cone}}
\nc{\dec}{\on{dec}}
\nc{\Diff}{\on{Diff}}
\nc{\dirlim}{\underset{\to}{\on{lim}}}
\nc{\dpar}{\partial}
\nc{\GL}{\on{GL}}
\nc{\CGr}{\cal{G}r}
\nc{\pr}{\on{pr}}
\nc{\semid}{|\!\!\!\times}
\nc{\Hom}{\on{Hom}}
\nc \RHom{\on {RHom}}
\nc \Proj{\mathrm {Proj\ }}
\nc \proj{\mathrm {proj}}
\nc{\Id}{\on{Id}}
\nc{\id}{\on{id}}
\nc{\Ima}{\on{Im}}
\nc{\invtimes}{\underset{\gets}{\otimes}}
\nc{\invlim}{\underset{\gets}{\on{lim}}}
\nc{\Lie}{\on{Lie}}
\nc{\re}{\on{Re }}
\nc{\Pic}{\on{Pic }}
\nc{\LPic}{\on{LPic }}
\nc{\Sch}{\on{Sch}}
\nc{\Sh}{\on{Sh}}
\nc{\Set}{\on{Set}}
\nc{\spo}{\on{sp\  }}
\nc{\Spec}{\on{Spec}}
\nc{\mSpec}{\on{mSpec}}
\nc{\Specb}{\bold {Spec}}
\nc{\Projb}{\bold {Proj}}
\nc{\Specan}{\on{Specan}}
\nc{\Spo}{\on{Sp}}
\nc{\Spf}{\on{Spf}}
\nc{\sym}{\on{sym}}
\nc{\symm}{\on{symm}}
\nc{\rop}{\on{r}}
\nc{\Td}{\on{Td}}
\nc{\Tor}{\on{Tor}}
\nc{\Artin}{\cal{A}rtin}
\nc{\Dgcoalg}{\cal{D}gcoalg}
\nc{\Dglie}{\cal{D}glie}
\nc{\Ens}{\cal{E}ns}
\nc{\Fsch}{\cal{F}sch}
\nc{\Groupoids}{\cal{G}roupoids}
\nc{\Holie}{\cal{H}olie}
\nc{\Mor}{\cal{M}or}
\nc{\CF}{\ensuremath{\cal{F}}}
\nc \Kc{\ensuremath{\cal K}}
\nc \Lc{{\ensuremath{\cal L}}}
\nc \lcc{{\mathcal l}} 
\nc \CC{{\ensuremath{\cal C}}} 
\nc \Cc{{\ensuremath {\cal C}}}
\nc \Pc{{\ensuremath{\cal P}}}
\nc \Dc{\ensuremath{\mathcal D}}
\nc \Ac{{\ensuremath{\cal A}}} 
\nc \Bc{{\ensuremath{\cal B}}}
\nc \Ec{{\ensuremath{\cal E}}}
\nc \Fc{{\ensuremath{\cal F}}}
\nc \Mcc{{\ensuremath{\cal M}}} 
\nc \hM{\hat{\Mcc}} 
\nc \bM{\bar {\Mcc}} 
\nc\hbM{\hat{\bar \Mcc}}  
\nc \Nc{{\ensuremath{\cal N}}}
\nc \Hc{{\ensuremath{\cal H}}} 
\nc \Ic{{\ensuremath{\cal I}}} 
\nc \Oc{\ensuremath{{\cal O}}}
\nc \Och{\hat{\cal O}} 
\nc \Sc{{\ensuremath{{\cal S}}}}
\nc \Tc{\ensuremath{{\cal T}}} 
\nc \Vc{{\ensuremath{{\cal V}}}} 
\nc{\CA}{{\ensuremath{{\cal A}}}}
\nc{\CB}{{\ensuremath{{\cal B}}}}
\nc{\C}{{\ensuremath{{\cal F}}}}
\nc{\Gc}{{\ensuremath{{\cal G}}}}
\nc{\CH}{\ensuremath{\mathcal H}}
\nc{\CI}{{\ensuremath{{\cal I}}}}
\nc{\CM}{{\ensuremath{{\cal M}}}}
\nc{\CN}{{\ensuremath{{\cal N}}}}
\nc{\CO}{{\ensuremath{{\cal O}}}}
\nc{\Rc}{{\ensuremath{{\cal R}}}}
\nc{\CT}{{\ensuremath{\mathcal T}}}
\nc{\CU}{\ensuremath{{\cal U}}}
\nc{\CV}{\ensuremath{{\cal V}}}
\nc{\CZ}{\ensuremath{{\cal Z}}}
\nc{\Homc}{\ensuremath{{\cal {Hom}}}}
\nc{\fa}{\frak{a}}
\nc{\fA}{\frak{A}}
\nc{\fg}{\frak{g}}
\nc{\fh}{\frak{h}}
\nc{\fI}{\frak{I}}
\nc{\fK}{\frak{K}}
\nc{\fm}{\frak{m}}
\nc{\fP}{\frak{P}}
\nc{\fS}{\frak{S}}
\nc{\ft}{\frak{t}}
\nc{\fX}{\frak{X}}
\nc{\fY}{\frak{Y}}
\nc{\bF}{\bar{F}}
\nc{\bCP}{\bar{\cal{P}}}
\nc{\bm}{\mbox{\bf{m}}}
\nc{\bT}{\mbox{\bf{T}}}
\nc{\hB}{\hat{B}}
\nc{\hC}{\hat{C}}
\nc{\hP}{\hat{P}}
\nc{\htest}{\hat P}
\nc{\nen}{\newenvironment}
\nc{\ol}{\overline}
\nc{\ul}{\underline}
\nc{\ra}{\to}
\nc{\lla}{\longleftarrow}
\nc{\lra}{\longrightarrow}
\nc{\Lra}{\Longrightarrow}
\nc{\Lla}{\Longleftarrow}
\nc{\Llra}{\Longleftrightarrow}
\nc{\hra}{\hookrightarrow}
\nc{\iso}{\overset{\sim}{\lra}}
\nc{\dsize}{\displaystyle}
\nc{\sst}{\scriptstyle}
\nc{\tsize}{\textstyle}
\newcommand {\bC} {\mathbb C}
\newcommand {\bZ} {\mathbb Z}
\newcommand{\bN}{\mathbb N}
\newcommand {\D} {\mathcal D}
\begin{document}
\numberwithin{equation}{section}
\title[Decomposition of plane D-modules]{decomposition of D-modules over a  hyperplane arrangement in the plane}
\author{Tilahun Abebaw and Rikard B\o gvad.}
%\author[T.~Abebaw]{Tilahun Abebaw}
\address{Department of Mathematics, Addis Ababa University and Stockholm University}\email{tabebaw@math.aau.edu.et, abebaw@math.su.se}
\address{Department of Mathematics, Stockholm University, Sweden} \email{rikard@math.su.se}
\date{}
\maketitle

\begin{abstract} Let $\alpha_{1},\alpha_{2}...,\alpha_{m}$ be linear forms
defined on $\bC^{n}$ and
$\mathcal{X}=\bC^{n}\setminus\cup_{i=1}^{m}V(\alpha_{i}),$ where
$V(\alpha_{i})=\{p\in\bC^{n}:\alpha_{i}(p)=0\}$. The coordinate ring
$O_{\mathcal{X}}$ of $\mathcal{X}$ is a holonomic $A_{n}$-module,
where $A_{n}$ is the n-th Weyl algebra and since holonomic
$A_{n}$-modules have finite length, $O_{\mathcal{X}}$ has finite
length. We consider a ''twisted'' variant of this $A_{n}$-module
which is also holonomic. Define $\rm{M_{\alpha}^{\beta}}$ to be the
free rank 1 $\bC[x]_{\alpha}$-module on the generator
$\alpha^{\beta}$ (thought as a multivalued function ), where
$\alpha^{\beta}=\alpha_{1}^{\beta_{1}}...\alpha_{m}^{\beta_{m}}$ and
the multi-index $\beta=(\beta_{1},...,\beta_{m})\in\bC^{m}$. % we can
Our main result is the computation of the number of decomposition
factors of $\rm{M_{\alpha}^{\beta}}$ and their description when
$n=2$.
\end{abstract}
\section{Introduction}\label{s-1}

\subsection{Definition of the module $\rm{M_{\alpha}^{\beta}}$}\label{ss-11}
Let $\alpha_{i}:\bC^{n}\longrightarrow\bC,$ $ i=1,2,...,m$
% such that,
%$$\alpha_{i}(x_{1},...,x_{n})=\sum_{j=1}^{n}\alpha_{ij}x_{j},\alpha_{ij}\in\bC$$
be linear forms and  $H_{i}=\{P\in\mathbb{C}^{n}:\alpha_{i}(P)=0\}$ the corresponding hyperplanes. If we let
$X=\mathbb{C}^{n}\setminus \cup_{i=1}^{m}H_{i}$ be the complement of the central hyperplane arrangement that $H_i, i=1,...m$ define, then the coordinate
ring of $X$ is the localization
$\mathbb{C}[x_{1},\dots,x_{n}]_{\alpha}$, where
$\alpha=\prod\limits_{i=1}^{m}\alpha_{i}$. This is a module over the Weyl Algebra $A_{n}$. Consider now for varying values of the
complex parameters $\beta=(\beta_{1},...,\beta_{m})\in \bC^{n}$, the multivalued
function
$$\alpha^{\beta}=\alpha_{1}^{\beta_{1}}...\alpha_{m}^{\beta_{m}}.$$
(we will throughout this
paper use the above multi-index notation. Also we will use $\bC[x]=\bC[x_{1},...,x_{n}]$.) The  $A_{n}$-module generated by $\alpha^{\beta}$ as a $\bC[x]_\alpha$-module can be abstractedly described in the following obvious way as a twisted version of $\bC[x]_\alpha$.
\begin{definition}\label{Def11}The module $\rm{M_{\alpha}^{\beta}}$, where $\beta=(\beta_{1},...,\beta_{m})\in \bC^{m}$ is (as a $\bC[x]$-module) the free rank 1 $\bC[x]_{\alpha}$-module on the generator $\alpha^{\beta}$. It is furthermore an $A_{n}$-module if we define
 $$\partial_{j}\alpha^{\beta}=\sum_{i=1}^{i=m}\frac {\beta_i\partial_{j}(\alpha_i)}{\alpha_i}\alpha^{\beta}$$  for $j=1,2,...,n$ and extend this to an action of the whole $A_{n}$ on $\rm{M_{\alpha}^{\beta}}$ .
\end{definition}

We may think of this as the fibres of a flat family of $A_{n}[\beta_1,...,\beta_n]$-modules, where now $\beta_i$ is a free scalar variable (i.e. commuting with the derivations).
The problem which we consider in this paper, and solve in the plane
case is to find the decomposition factors $\rm{DF(M_{\alpha}^{\beta})}$ of $\rm{M_{\alpha}^{\beta}}$, and the number $c(\rm{M_{\alpha}^{\beta}})$ of them.  A starting point for us was the intriguing fact --- see \cite{CDCP1}---that $c(\rm{M_{\alpha}^{\beta}})$ is easily expressible in terms of combinatorial data of the hyperplane arrangement when $\beta=0$.  

\subsection{Motivating example}\label{ss12}The case where $m=n=1$, and the $A_{1}$-module $\rm{M_{\alpha}^{\beta}}=\bC[x]_{x}x^{\beta}$ is easy to analyze.

\begin{proposition}\label{Prop11}(i) If $\beta\in\bZ$, then $c(\rm{M_{\alpha}^{\beta})}=2$.\\
(ii) If $\beta\in\bC\setminus\bZ$, then $M_{\alpha}^{\beta}$ is a simple $A_{1}$-module, so $c(M_{\alpha}^{\beta})=1$.
\end{proposition}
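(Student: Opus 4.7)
The argument naturally splits according to whether $\beta$ is an integer, and the two cases require quite different ideas.

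For part (i), when $\beta\in\mathbb{Z}$ the symbol $\alpha^{\beta}=x^{\beta}$ is just an invertible element of $\mathbb{C}[x]_{x}$, and the map $f\cdot x^{\beta}\mapsto fx^{\beta}$ (the product now read inside $\mathbb{C}[x,x^{-1}]$) is readily checked to be an $A_{1}$-isomorphism $M_{\alpha}^{\beta}\cong \mathbb{C}[x,x^{-1}]$ intertwining the twisted action of $\partial$ in Definition \ref{Def11} with the standard one. The obvious filtration $0\subset \mathbb{C}[x]\subset \mathbb{C}[x,x^{-1}]$ has quotients $\mathbb{C}[x]$ and $\mathbb{C}[x,x^{-1}]/\mathbb{C}[x]$; these are the two classical simple holonomic $A_{1}$-modules, so $c(M_{\alpha}^{\beta})=2$.

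For part (ii), the point is to prove simplicity, i.e.\ that any nonzero $v\in M_{\alpha}^{\beta}$ generates the whole module. The key tool is the Euler operator $\theta:=x\partial\in A_{1}$, which by Definition \ref{Def11} acts on the basis $\{x^{k}\alpha^{\beta}\}_{k\in\mathbb{Z}}$ of $M_{\alpha}^{\beta}$ by $\theta(x^{k}\alpha^{\beta})=(k+\beta)x^{k}\alpha^{\beta}$. Since $\beta\notin\mathbb{Z}$, the scalars $k+\beta$ are pairwise distinct and never zero. Hence writing $v=\sum_{k=a}^{b}c_{k}x^{k}\alpha^{\beta}$ as a finite sum of $\theta$-eigenvectors with distinct eigenvalues, a standard Vandermonde/Lagrange interpolation polynomial in $\theta$ applied to $v$ isolates any chosen nonzero summand $x^{k_{0}}\alpha^{\beta}$.

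Once a single $x^{k_{0}}\alpha^{\beta}$ lies in $A_{1}\cdot v$, the relations $x\cdot(x^{k}\alpha^{\beta})=x^{k+1}\alpha^{\beta}$ and $\partial(x^{k}\alpha^{\beta})=(k+\beta)x^{k-1}\alpha^{\beta}$, together with the fact that $k+\beta\neq 0$ for every $k\in\mathbb{Z}$, let us reach every $x^{j}\alpha^{\beta}$ by repeated application of $x$ and $\partial$. Hence $A_{1}\cdot v=M_{\alpha}^{\beta}$, and $c(M_{\alpha}^{\beta})=1$. The only mildly subtle step is the eigenvalue extraction in the middle paragraph; both invocations of $\beta\notin\mathbb{Z}$ (distinctness of the $\theta$-eigenvalues and invertibility of the scalars coming from $\partial$) fail in exactly the integer case, which is the mechanism that makes the two cases behave differently.
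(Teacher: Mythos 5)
Your proposal is correct and follows essentially the same route as the paper: part (i) identifies $M_{\alpha}^{\beta}$ with $\bC[x]_{x}$ and uses the filtration $\bC[x]\subset\bC[x]_{x}$ with the two classical simple quotients, and part (ii) is the paper's argument in only slightly different clothing — the paper applies the explicit product $\prod_{i=0}^{k-1}(x\partial_{x}-(\beta+i))$ to isolate the top monomial, which is exactly your eigenvalue-extraction via a polynomial in $x\partial_{x}$, followed by the same use of $x$ and $\partial_{x}$ with the nonvanishing scalars $\beta+k-i\notin\bZ$.
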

\begin{proof} By definition $\rm{M_{\alpha}^{\beta}}=\mathbb{C}[x]_{x}x^{\beta}\cong\oplus_{i\in \mathbb{Z}}\mathbb{C}x^{\beta+i}$.\\
$(i)$ If $\beta\in \mathbb{Z}$, then clearly
$\rm{M_{\alpha}^{\beta}}\cong \mathbb{C}[x]_{x}$.  It is an easy exercise to see that
$$\bC[x]\hookrightarrow\bC[x]_{x}\twoheadrightarrow\bC[x]_{x}/\bC[x]$$
 and that $\bC[x]$, $\bC[x]_{x}/\bC[x]$ are simple $\bC\langle x,\partial_{x}\rangle$-module. \\(ii) Suppose  $\beta \in \mathbb{C}\setminus\mathbb{Z}$. We have first that
$$(x\partial_{x}-(\beta+i))x^{\beta+j}=(j-i)x^{\beta+j}$$ If
$f=\sum\limits_{i=0}^{k}\alpha_{i}x^{\beta+i}\in M_{\alpha}^{\beta}$
where $\alpha_{k}\neq0$, then
$$\prod_{i=0}^{k-1}(x\partial_{x}-(\beta+i))f=\alpha_{k}k!x^{\beta+k}.$$
So some monomial $x^{\beta+k}\in A_{1}f$. Now use the formulas
\begin{equation}
\partial_{x}^{i}x^{\beta+k}=(\beta+k)...(\beta+k-i)x^{\beta+k-i}\neq 0
\end{equation} by assumption, and
\begin{equation}
x^{i}x^{\beta+k}=x^{\beta+k+i},
\end{equation} to show that all monomials $x^{\beta+i}\in A_{1}f$ for all $i\in \bZ$, and so $\rm{M_{\alpha}^{\beta}}\subset A_{1}f$. Since $f$ was an arbitrary element, this means that $\rm{M_{\alpha}^{\beta}}$ is simple. 
\end{proof}
The main result of this paper is  the following theorem that gives the number of decomposition factors when
$n=2$. Note that the different possibilities are distinguished by linear conditions on $\beta$.

\begin{thm}\label{MT} Assume that $n=2$. Let
$\beta=(\beta_{1},...,\beta_{m})\in\bC^{m}$ and $k$ be the number of
$\beta_{i}\in\bZ$. \\(i) If $k=m$, then
$\rm{c(M_{\alpha}^{\beta})=2m}.$\\(ii) If $k<m$ and\\

\qquad(a) $\sum_{i=1}^{m}\beta_{i}\in\bZ$, then
$\rm{c(M_{\alpha}^{\beta})=m+k-1}.$ \\

\qquad (b) $\sum_{i=1}^{m}\beta_{i}\in\bC\setminus\bZ$, then
$\rm{c(M_{\alpha}^{\beta})=k+1}.$
\end{thm}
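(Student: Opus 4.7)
The plan is to classify the composition factors of $M_\alpha^\beta$ by their support. Because $M_\alpha^\beta$ is holonomic with singular locus contained in the arrangement, every simple subquotient has support equal to $\bC^2$, to one of the lines $H_i$, or to the origin. First I normalize: the multiplication by $\alpha^\gamma$ gives an $A_2$-module isomorphism $M_\alpha^\beta \to M_\alpha^{\beta+\gamma}$ for any $\gamma \in \bZ^m$, so I may assume $\beta_i=0$ for $i\le k$ and $\beta_i\notin\bZ$ otherwise, after permuting the $\alpha_i$.

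Next, I count the non-origin composition factors by a local transverse argument. Localize $M_\alpha^\beta$ at the generic point of $H_i$; in transverse coordinates $(z,w)$ with $z=\alpha_i$ this becomes essentially a tensor product of a polynomial module in $w$ with $\bC[z]_z z^{\beta_i}$. Applying Proposition~\ref{Prop11} to the $z$-factor, the localization has length $2$ when $\beta_i\in\bZ$ and length $1$ otherwise. It follows that $M_\alpha^\beta$ has exactly one composition factor containing $H_i$ in its support for each $i$ with $\beta_i\in\bZ$, together with one composition factor with support all of $\bC^2$ (the minimal extension of the local system $\alpha^\beta$ from $X$). This contributes $1+k$ non-origin composition factors in all three cases of the theorem.

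The remaining, and essential, step is to count the composition factors supported at the origin: these must number $m-1$ in case (i), $m-2$ in case (ii)(a), and $0$ in case (ii)(b). I would proceed by induction on $m$, comparing $M_\alpha^\beta$ with a smaller module of the same form via the short exact sequence
\begin{equation*}
0 \to M_{\alpha/\alpha_i}^{\beta'} \to M_\alpha^\beta \to Q \to 0,
\end{equation*}
where $\beta' = (\beta_1,\ldots,\hat{\beta}_i,\ldots,\beta_m)$, the index $i$ is chosen with $\beta_i=0$ when possible, and $Q$ is supported on $H_i$. A second application of Proposition~\ref{Prop11}, now to a transverse coordinate on $H_i$ through the origin, determines the origin-supported length of $Q$: it is $1$ exactly when the induced transverse twist is integer. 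The bases $m=1$ (Proposition~\ref{Prop11} applied transversally) and $m=2$ (a direct computation covering all three monodromy patterns) complete the induction.

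The main obstacle is the unit drop between the origin counts of cases (i) and (ii)(a): when all $\beta_i$ are integral the count is $m-1$, but as soon as one $\beta_i$ leaves $\bZ$ while $\sum \beta_i$ remains integral, the count drops to $m-2$. This reflects the fact that one of the ``candidate'' origin-supported simples is absorbed into the minimal extension $L$ in case (ii)(a) — equivalently, the cyclic vector $\alpha^\beta$ acquires a nontrivial global residue at the origin that is already accounted for by $L$ when $\sum \beta_i \in \bZ$ but some $\beta_i \notin \bZ$. Making this absorption precise will likely require an explicit eigenspace analysis of the Euler operator $x\partial_x + y\partial_y$ acting on $\alpha^\beta$ and on its $A_2$-orbit, in the spirit of the one-dimensional calculation in the proof of Proposition~\ref{Prop11}(ii), and verifying that in the remaining case (ii)(b) the nontriviality of the origin monodromy $\exp(2\pi i \sum \beta_i)$ rules out every origin-supported simple.
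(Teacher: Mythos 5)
Your counting of the non-origin factors (one full-support factor plus one factor supported on $H_i$ for each $i$ with $\beta_i\in\bZ$, via lengths of localizations at the generic points of the lines) is sound and matches what the paper obtains in Section 5 through the explicit identification $K_i\cong {i_{H_i}}_*M_t^{\beta_H}$; likewise your reduction of the mixed case by stripping off the lines with $\beta_i\in\bZ$ is essentially the paper's filtration (4.1)/(5.1). But there is a genuine gap at the heart of the argument: the count of origin-supported factors when \emph{no} $\beta_i$ is an integer (the case $k=0$, $m\geq 3$, which must give $m-2$ origin factors if $\sum\beta_i\in\bZ$ and $0$ otherwise). Your proposed induction cannot reach this case, because the exact sequence
\begin{equation*}
0 \to M_{\alpha/\alpha_i}^{\beta'} \to M_\alpha^\beta \to Q \to 0
\end{equation*}
only exists when $\beta_i\in\bZ$ (only then is $M_\alpha^\beta$ a localization of $M_{\alpha/\alpha_i}^{\beta'}$; for $\beta_i\notin\bZ$ the element $\prod_{j\neq i}\alpha_j^{\beta_j}$ is not in $M_\alpha^\beta$ and there is no natural map at all). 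So once all integral exponents have been removed you are left with $\bC[x,y]_{\tilde\alpha}\tilde\alpha^{\tilde\beta}$, all exponents non-integral, and your induction has no step to apply; the base cases $m=1,2$ do not cover it. This is exactly the part you flag as the ``main obstacle'' and leave as a hope (``an explicit eigenspace analysis of the Euler operator''), i.e.\ it is not proved.

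For comparison, this missing core is what Section 4 of the paper (Theorem \ref{Thm61}) supplies: one works with the annihilator of $\alpha^{\tilde\beta}$, generated up to the relevant weight by the Euler operator $P$ and the operator $Q$ obtained by clearing denominators in the $\partial_y$-action, proves a normal-form (Gr\"obner) lemma for $A_2$ modulo $A_2P+A_2Q$, and shows (Lemmas \ref{Lem61}--\ref{Lem63}) that $A_2\alpha^{\tilde\beta}/A_2x\alpha^{\tilde\beta}$ is a simple module supported at the origin precisely when $-(m-2)\leq\vert\tilde\beta\vert+1\leq -1$ and zero otherwise; iterating $A_2\alpha^{\tilde\beta}\supset A_2x\alpha^{\tilde\beta}\supset\cdots$ and using the stabilization statements of Lemma \ref{Lem64} then produces exactly $m-2$ origin-supported factors when $\vert\beta\vert\in\bZ$ and simplicity when $\vert\beta\vert\notin\bZ$. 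Some argument of this kind (or an equivalent one via perverse sheaves/quivers for the local monodromy at the origin) must be added before your outline becomes a proof; the remaining bookkeeping in your proposal would then indeed assemble the counts $2m$, $m+k-1$, $k+1$.
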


As a corollary,  $M_{\alpha}^{\beta}$ is simple in exactly the following two cases: When $m\leq 2$ and $k=0$, or if $m\geq 3$ and $k=0$ and $\sum_{i=1}^{m}\beta_{i}\in\bC\setminus\bZ$.

\bigskip

It is sometimes said that illustrations of concrete calculations with D-modules are scarce; this has partly motivated this note.   D-modules on hyperplane configurations has been of interest to several authors, e g \cite{SST, KV, Walter}, not to mention many works on the corresponding equivalent category of sheaves.
Khoroshkin and Varchenko (see \cite{KV}) study a subcategory of holonomic D-modules with regular singularities along the stratification given by the intersections of the hyperplanes, and describe it in terms of quivers.  This category however does not include our modules.  By the Riemann-Hilbert correspondence and the known description of D-modules in terms of quivers, the problem studied in this note corresponds both to an assertion in a certain category of perverse sheaves and to an assertion on the number of decomposition factors of a quiver(see \cite{JEB2, B}, over a certain path algebra, and it would be illuminating to see a proof of our theorem using these tools.
Another technique that we do not explicitly use are Bernstein-Sato polynomials. What we indirectly construct,  corresponds more directly to a multi-dimensional version(see \cite{Sabbah}), but it may be mentioned that they have been calculated for hyperplane configurations in e g \cite{Walter}.

The organization of the paper is as follows. In section 2 we prove that $M_{\alpha}^{\beta}$ is holonomic  and give a lemma on external products that we will need and some preliminary results. In the next section we
consider the easy normal crossings case when $m\leq n$ and also the case---our starting point---
when all $\beta_{i}\in\bZ$, which corresponds to
$\rm{M_{\alpha}^{\beta}\cong\bC[x]_{\alpha}}.$ In the last case
there is a combinatorial description of the number of decomposition
factors. Then follows in section 4 the main part of the proof of Theorem \ref{MT}. The main idea of the argument is to study the annihilator of
$\alpha^{\beta}\in \rm{M_{\alpha}^{\beta}}$. 
In section 5 we conclude the proof of Theorem \ref{MT}, and also describe the support of the decomposition factors.

\section{Preliminaries}
\subsection{Some easy properties of $M_{\alpha}^{\beta}$ }
It is immediate that  $M_{\alpha}^{\beta}$ is holonomic, since it is the direct image of a connection on $X$. This is also easy to see directly, by copying the proof for the localization(which corresponds to the case $\beta=0$). The latter argument also provides an estimate of the multiplicity of the module, so we sketch it here.

\begin{proposition}\label{Prop12} (i) $\rm{M}_{\alpha}^{\beta}\cong \rm{M}_{\alpha}^{\gamma}$, if $\beta\equiv\gamma$ $(\rm{mod}\bZ^{m}$).\\
(ii) $\rm{M}_{\alpha}^{\beta}\cong\mathbb{C}[x]_{\alpha}$, if
$\beta\in\bZ^{m}$.\\
(iii) $M_{\alpha}^{\beta}$ is holonomic with multiplicity less than $(m+1)^n$.
\end{proposition}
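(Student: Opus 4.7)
For (i) and (ii), I would exhibit an explicit $\bC[x]_\alpha$-linear map and verify compatibility with the $A_n$-action. If $\beta \equiv \gamma \pmod{\bZ^m}$ then $\alpha^{\beta-\gamma} := \prod_i \alpha_i^{\beta_i-\gamma_i}$ is a Laurent monomial, hence a unit in $\bC[x]_\alpha$, so
$$\phi \colon M_\alpha^\beta \longrightarrow M_\alpha^\gamma, \qquad f\,\alpha^\beta \longmapsto (f\alpha^{\beta-\gamma})\,\alpha^\gamma$$
is a $\bC[x]_\alpha$-module isomorphism with inverse multiplication by $\alpha^{\gamma-\beta}$. For $A_n$-equivariance, one expands $\partial_j\bigl((f\alpha^{\beta-\gamma})\alpha^\gamma\bigr)$ by the Leibniz rule, using $\partial_j\alpha^{\beta-\gamma} = \sum_i \frac{(\beta_i-\gamma_i)\partial_j\alpha_i}{\alpha_i}\alpha^{\beta-\gamma}$; the twist contributions $(\beta_i-\gamma_i)$ coming from $\alpha^{\beta-\gamma}$ combine with the $\gamma_i$ coming from $\alpha^\gamma$ to yield exactly the $\beta_i$ prescribed by Definition \ref{Def11}. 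Part (ii) is then the case $\gamma=0$: $\alpha^0=1$ and the twist vanishes, so $M_\alpha^0 = \bC[x]_\alpha$ as $A_n$-modules.

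For (iii), holonomicity is immediate from general theory, since $M_\alpha^\beta$ is the direct image under the open affine embedding $X\hookrightarrow\bC^n$ of the rank-one connection on $X$ with flat section $\alpha^\beta$. To obtain the multiplicity bound, I will mimic the proof for the untwisted case $\beta=0$, constructing an explicit filtration compatible with the Bernstein filtration $B_\bullet A_n$. Because $\alpha^\beta$ freely generates the underlying $\bC[x]_\alpha$-module, every element of $M_\alpha^\beta$ is uniquely of the form $(g/\alpha^k)\alpha^\beta$, so I set
$$F_k \;=\; \Bigl\{\,\tfrac{g}{\alpha^k}\alpha^\beta : g \in \bC[x],\ \deg g \le k(m+1)\,\Bigr\}.$$
Evidently $F_k\subset F_{k+1}$, $\bigcup_k F_k = M_\alpha^\beta$, and $x_j\cdot F_k\subseteq F_{k+1}$. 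A Leibniz computation using Definition \ref{Def11} gives
$$\partial_j\!\Bigl(\tfrac{g}{\alpha^k}\alpha^\beta\Bigr) \;=\; \frac{\alpha\,\partial_j g - k\,g\,\partial_j\alpha + g\sum_{i=1}^m \beta_i(\partial_j\alpha_i)\prod_{l\neq i}\alpha_l}{\alpha^{k+1}}\,\alpha^\beta,$$
whose numerator has degree at most $k(m+1)+m-1<(k+1)(m+1)$, so $\partial_j\cdot F_k\subseteq F_{k+1}$, and inductively $B_jA_n\cdot F_k\subseteq F_{k+j}$.

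Finally, because the map $g\mapsto(g/\alpha^k)\alpha^\beta$ from $\bC[x]_{\le k(m+1)}$ into $F_k$ is injective, $\dim_\bC F_k\le\binom{k(m+1)+n}{n}=\frac{(m+1)^n}{n!}k^n+O(k^{n-1})$. Since $M_\alpha^\beta$ is finitely generated over $A_n$ (for example from the direct-image description), any good filtration $G_\bullet$ has $G_0$ inside some $F_{k_0}$, whence $G_k\subseteq B_kA_n\cdot F_{k_0}\subseteq F_{k+k_0}$, so the Hilbert polynomial of $(M_\alpha^\beta,G_\bullet)$ has degree at most $n$; Bernstein's inequality then forces degree exactly $n$, reconfirming holonomicity, and the multiplicity is bounded by the leading coefficient of $\dim F_{k+k_0}$, namely $(m+1)^n$. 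The only mildly nontrivial step is the degree bookkeeping that shows each $\partial_j$ advances the filtration by exactly one step; everything else is a routine comparison between an arbitrary good filtration and the explicit one $F_\bullet$.
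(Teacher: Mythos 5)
Your proof of (i) and (ii) is exactly the paper's argument: multiplication by the unit $\alpha^{\beta-\gamma}=\alpha^{\tau}$, $\tau\in\bZ^m$, checked to be $A_n$-linear via the Leibniz rule, with (ii) the case $\gamma=0$. For (iii) you build the same filtration $F_k=\{(g/\alpha^k)\alpha^\beta:\deg g\le k(m+1)\}$ with the same properties ($x_jF_k\subset F_{k+1}$, $\partial_jF_k\subset F_{k+1}$, $\dim F_k\le\binom{k(m+1)+n}{n}$), so the computational core coincides; the difference is in how you finish. You presuppose that $M_\alpha^\beta$ is finitely generated over $A_n$ (citing the direct-image description), pick a good Bernstein filtration $G_\bullet$ with $G_k=B_kA_n\cdot G_0$, trap it inside a shift of $F_\bullet$, and conclude via Bernstein's inequality that the module is holonomic with multiplicity at most $(m+1)^n$. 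The paper instead never assumes finite generation: it looks at the finitely generated submodules $A_n\Gamma_i$, applies the criterion of \cite{CSC}, Ch.~10 Lemma~3.1 to each to get holonomicity with the uniform multiplicity bound $(m+1)^n$, and then uses this uniform bound (additivity of multiplicity, so the ascending chain stabilizes) to deduce that the union $M_\alpha^\beta$ is itself holonomic. Both routes are correct; yours is shorter if one grants the general theorem that the direct image of a connection under the open embedding is holonomic (which the paper also quotes, but only as a remark), while the paper's version is self-contained in that it reproves holonomicity, and hence finite generation, directly from the filtration without any a priori input.
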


\begin{proof}(ii) is a special case of (i). Suppose that $\beta=\gamma+\tau$, $\tau\in\bZ^{m}$.\\
 Define $\theta:\rm{M}_{\alpha}^{\beta}\longrightarrow \rm{M}_{\alpha}^{\gamma}$ by: $$\theta(\frac{p}{\alpha^{r}}\alpha^{\beta})=\frac{p}{\alpha^{r}}\alpha^{\tau}\alpha^{\gamma}$$
 Clearly this is a 1-1, onto map and it is an easy exercise to show that it is an $A_{n}$-module homomorphism. This proves (ii).
 
To prove holonomicity, recall that $m$ is the degree of $\alpha$ and for $k\geq 0$ set 
$$\Gamma_{i}=\{\frac{q}{\alpha^{k}}\alpha^{\beta}:q\in\bC[x],\rm{deg}q\leq(m+1)i\}.
$$  It is straightforward to prove that this filtration has the following properties.
\begin{itemize}
\item $ \Gamma_{i}\subset \Gamma_{j}$ if $i\leq j$.
\item $ \cup_{i\geq 0} \Gamma_{i}=\rm{M}_{\alpha}^{\beta}$.
\item $x_i\Gamma_j\subset \Gamma_{j+1}$, $\partial_{x_i}\Gamma_j\subset \Gamma_{j+1}$.
\end{itemize}
Hence for every fixed $i$ the submodule $A_n\Gamma_i$ is holonomic. The dimension of $\Gamma_{j}$ cannot exceed the dimension of the vector space of polynomials of degree $(m+1)j$ and so
 $$\rm{dim}_{\bC}(A_n\Gamma_i)\cap \Gamma_j\leq\left(\begin{array}{ccc}(m+1)j+n\\
 n
 \end{array}\right)\leq\frac{(m+1)^{n}j^{n}}{n!}+cj^{n-1}$$ for large enough values of $j$ and some constant c.  By (\cite{CSC}, Ch.10 Lemma 3.1), $A_n\Gamma_i$ is a holonomic module of multiplicity less than or equal to $(m+1)^{n}$. By this fix upper bound on the multiplicity, the additivity of multiplicity, also $\rm{M}_{\alpha}^{\beta}$ has to be holonomic(and the filtration is a good one, by the way).
\end{proof}

\subsection{External product of modules} Let  A, B be $\bC$-algebras. Suppose that M  is a left A-module and N is a left B-module. Then the $\bC$-vector space $M\otimes_{\bC}N$ is an $A\otimes_{\bC}B$-module denoted by $M\widehat{\otimes}N$ and called the {\it{external product}} of $M$ and $N$. The action of $a\otimes b\in A\widehat{\otimes}B $ on $u\otimes v\in M\otimes_{K}N$ is given by the formula $(a\otimes b)(u\otimes v)=au\otimes bv.$ If $A=A_n$ and $B=A_m$, then $M\widehat{\otimes}N$ is in this way an $A_{n+m}$-module(for this see \cite{CSC}). We will use that the external product of two simple Weyl algebra modules is simple and include a proof for convenience, shown to us by Rolf K\"allstr\"om. 
We need the following well-known result \cite[2.6]{DJ1}.
\begin{lemma}\label{Lem14} If M is a simple $A_{n}$-module then $\rm{End_{A_{n}}M}=\mathbb{C}.$
\end{lemma}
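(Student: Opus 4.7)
The plan is to prove this ``Dixmier's lemma'' style statement by combining Schur's lemma with a countable-dimension argument against $\mathbb{C}$.

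First I would observe that $A_n$ has countable dimension over $\mathbb{C}$, since it has a $\mathbb{C}$-basis given by the monomials $x^{\alpha}\partial^{\gamma}$ indexed by $(\alpha,\gamma)\in\mathbb{N}^{2n}$. Any simple $A_n$-module $M$ is cyclic, generated by any nonzero element $m_0$, so $M = A_n m_0$ is also of at most countable $\mathbb{C}$-dimension.

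Next, by the ordinary Schur lemma, $D := \operatorname{End}_{A_n}(M)$ is a division ring (and a $\mathbb{C}$-algebra via the inclusion $\mathbb{C}\hookrightarrow D$, $c \mapsto c\cdot\operatorname{id}_M$). For a fixed $m_0 \neq 0$, the evaluation map
\begin{equation*}
\operatorname{ev}_{m_0} : D \longrightarrow M, \qquad \varphi \longmapsto \varphi(m_0)
\end{equation*}
is $\mathbb{C}$-linear and injective: if $\varphi(m_0)=0$ with $\varphi \neq 0$, then $\varphi$ would be invertible in $D$ and hence injective on $M$, contradicting $\varphi(m_0)=0$. Therefore $\dim_{\mathbb{C}} D \leq \dim_{\mathbb{C}} M$ is countable.

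Now I would take an arbitrary $\varphi \in D$ and show $\varphi \in \mathbb{C}$. If $\varphi$ were transcendental over $\mathbb{C}$, then $\mathbb{C}[\varphi]$ would be a polynomial ring and every nonzero element would be invertible in $D$, so $\mathbb{C}(\varphi) \hookrightarrow D$. But $\mathbb{C}(\varphi)$ has uncountable $\mathbb{C}$-dimension, since the elements $\{(\varphi - c)^{-1}\}_{c \in \mathbb{C}}$ are $\mathbb{C}$-linearly independent (the standard partial-fractions argument). This contradicts the countability of $\dim_{\mathbb{C}} D$. Hence $\varphi$ is algebraic over $\mathbb{C}$, and since $\mathbb{C}$ is algebraically closed and $D$ is a division ring with no zero divisors, the minimal polynomial of $\varphi$ must be linear, giving $\varphi \in \mathbb{C}$.

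The main (and really the only) subtle step is the cardinality comparison: one must be careful that ``cyclic $A_n$-module'' genuinely yields countable $\mathbb{C}$-dimension and that $\{(\varphi-c)^{-1}\}_{c\in\mathbb{C}}$ are independent over $\mathbb{C}$. Both are standard, so the proof is essentially a two-line application of Schur once the dimension bookkeeping is in place.
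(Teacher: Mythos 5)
Your argument is correct and complete. Note, though, that the paper does not actually prove this lemma at all: it is quoted as a well-known result with a citation to Dixmier's \emph{Enveloping algebras} \cite[2.6]{DJ1}, so you have supplied a self-contained proof where the authors rely on a reference. Your route is the standard countable-dimension refinement of Schur's lemma: $A_n$ has countable $\mathbb{C}$-dimension, so a simple (hence cyclic) module $M$ does too; the evaluation map $\varphi\mapsto\varphi(m_0)$ embeds the division ring $D=\operatorname{End}_{A_n}(M)$ into $M$, so $\dim_{\mathbb{C}}D$ is countable; and a transcendental $\varphi\in D$ would force $\mathbb{C}(\varphi)\hookrightarrow D$, contradicting countability because the uncountable family $\{(\varphi-c)^{-1}\}_{c\in\mathbb{C}}$ is linearly independent. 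All the steps you flag as the delicate ones are indeed fine: invertibility in $D$ gives injectivity of the evaluation map, the inverses of nonzero elements of $\mathbb{C}[\varphi]$ commute with $\varphi$ so they really generate a copy of the rational function field, and the final step uses both that $\mathbb{C}$ is algebraically closed and that $D$ has no zero divisors. The only genuine hypotheses your proof leans on beyond the paper's citation are the uncountability of $\mathbb{C}$ (which Dixmier's and Quillen's arguments avoid, making them valid over arbitrary algebraically closed, or even general, base fields of any cardinality), in exchange for which you get a short, elementary argument that is perfectly adequate for the setting of this paper.
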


\begin{lemma}\label{Prop14} Let M be a simple $A_{n}$-module and N be a simple $A_{m}$-module. Then $M\widehat{\otimes}N$ is a simple $A_{m+n}$-module.
\end{lemma}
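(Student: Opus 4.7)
The plan is to pick a nonzero $A_{n+m}$-submodule $W\subset M\widehat{\otimes}N$ and show that $W=M\widehat{\otimes}N$, by isolating pure tensors via the Jacobson density theorem.

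Choose $0\neq w\in W$ and write $w=\sum_{i=1}^{k}u_{i}\otimes v_{i}$ with $k$ minimal. The minimality of $k$ forces both $\{u_{1},\ldots,u_{k}\}$ and $\{v_{1},\ldots,v_{k}\}$ to be $\bC$-linearly independent: otherwise one could collect terms to produce a shorter representation. By Lemma \ref{Lem14} the commutant $\End_{A_{n}}(M)$ equals $\bC$, so the Jacobson density theorem applied to the simple $A_{n}$-module $M$ yields, for each index $j\in\{1,\ldots,k\}$, an element $a_{j}\in A_{n}$ with $a_{j}u_{i}=\delta_{ij}u_{1}$. Acting on $w$ by $a_{j}\otimes 1\in A_{n}\widehat{\otimes}A_{m}\subset A_{n+m}$ gives
$$(a_{j}\otimes 1)w=u_{1}\otimes v_{j}\in W$$
for every $j$.

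Now I would use the simplicity of the two factors separately. Since $N$ is simple and $v_{1}\neq 0$, we have $A_{m}v_{1}=N$; acting by elements of $1\otimes A_{m}$ on $u_{1}\otimes v_{1}\in W$ gives $u_{1}\otimes N\subset W$. Similarly, since $M$ is simple and $u_{1}\neq 0$, acting on each $u_{1}\otimes v$ (for $v\in N$) by $A_{n}\otimes 1$ produces every $u\otimes v$ with $u\in M$. Hence every pure tensor lies in $W$, and since pure tensors $\bC$-span $M\otimes_{\bC}N$, we conclude $W=M\widehat{\otimes}N$.

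The only real subtlety is the invocation of the Jacobson density theorem, which requires exactly the input provided by Lemma \ref{Lem14}, namely $\End_{A_{n}}(M)=\bC$; once that is in hand, the extraction of $u_{1}\otimes v_{j}$ from $w$ is automatic and the rest is a direct application of simplicity on each factor. I do not foresee any genuine technical obstacle beyond checking the linear independence that justifies the use of density.
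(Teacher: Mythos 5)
Your argument is correct, but it takes a different route from the paper. You invoke the Jacobson density theorem: since Lemma~\ref{Lem14} gives $\End_{A_{n}}(M)=\bC$, density lets you manufacture operators $a_{j}\in A_{n}$ with $a_{j}u_{i}=\delta_{ij}u_{1}$ on the $\bC$-linearly independent set $u_{1},\dots,u_{k}$ (and minimality of $k$ does indeed force independence of both the $u_{i}$ and the $v_{i}$), so $(a_{j}\otimes 1)w$ is a nonzero pure tensor in $W$; from there simplicity of $N$ and then of $M$ fills out all of $M\widehat{\otimes}N$. The paper instead proceeds by induction on the number $k$ of terms in $f=\sum m_{i}\otimes n_{i}$: if some annihilator $\Ann(m_{i})$ is not contained in $\Ann(m_{j})$, applying a suitable $a\otimes 1$ shortens the expression; if all annihilators coincide, Lemma~\ref{Lem14} forces the induced isomorphism $A_{n}/J_{1}\cong M$ composed maps to be a scalar, so $m_{2}=\alpha m_{1}$ and terms can be merged, again shortening the expression. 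So both proofs hinge on the same key input, $\End_{A_{n}}(M)=\bC$, but you use it to import the density theorem (shorter, cleaner, at the cost of citing a heavier standard tool), whereas the paper in effect re-proves by hand exactly the instance of density it needs, keeping the argument self-contained and elementary. Either way the conclusion and the role of Lemma~\ref{Lem14} are the same, and your reduction of the problem to producing a single nonzero pure tensor is sound.
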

\begin{proof} Let $0\neq f=\sum_{i=1}^{k}m_{i}\otimes n_{i}\in M\widehat{\otimes}N$ be an arbitrary element. We want to show that $A_{m+n}f=M\widehat{\otimes}N$. We will make an induction on $k$.

\subsection*{Step I}Let $f=m_{1}\otimes n_{1}, m_{1}\neq 0, \ n_{1}\neq 0$. 
Let  $g=\sum_{i=1}^{k}r_{i}\otimes s_{i}\in M\widehat{\otimes}N$ be an arbitrary element.
We know that $A_{n}m_{1}=M$ and $A_{m}n_{1}=N$ and hence there are $a_i\in A_n$ and $b_i\in A_m$ such that
$r_i=a_im_1$ and $s_i=b_in_1$, for $i=1,...,k$. Then $g=(\sum_{i=1}^{k}a_{i}\otimes b_{i})(m_1\otimes n_1)$, and hence $A_{m+n}f=M\widehat{\otimes}N$.

\subsection*{Step II} Now let $f=\sum\limits_{i=1}^{k}m_{i}\otimes n_{i}, m_{i}\in M , n_{i}\in N$ where $k\geq 2$ and  $n_i,\ i=1,...,k$ are linearily independent (over $\bC$). Let $J_{i}:={\rm{Ann}}(m_{i})$. 
Assume first that there are  $1\leq i,j\leq k$ and $a$ such that $a\in J_i\setminus J_j$. Then $h:=(a\otimes1)f=\sum\limits_{i=1}^{k}m_{i}\otimes n_{i}\neq 0$, is a sum of fewer terms than $f$ and by induction $A_{m+n}h=M\widehat{\otimes}N$. This implies that $A_{m+n}f=M\widehat{\otimes}N$.
Otherwise $J_i=J_j$ for all $i,j$. Then we have an isomorphism $\phi_{1}:A_{n}/J_{1}\longrightarrow
M$ defined by 
 $ a+J_{1}\longmapsto am_{1}$ and a similar isomorphism $\phi_{2}:A_{n}/J_{2}\longrightarrow M$.
 By Lemma~\ref{Lem14}, $\eta(m):=\phi_{2}o\phi_{1}^{-1}(m)=\alpha m$ for some $\alpha\in\mathbb{C}$ and all $m\in M$. This implies $\eta(m_{1})=\alpha m_{1}=m_{2}$ and hence  
 $$f=m_{1}\otimes (n_{1}+\alpha n_2)+m_{3}\otimes n_{3}+...+m_{k}\otimes n_{k}.$$ 
 Thus, again by induction $A_{m+n}f=M\widehat{\otimes}N$. This finishes the proof.

 \end{proof}

\subsection{Decomposition factors}\label{ss-16}
Let R be a ring and M be an R-module. If $0=M_{0}\subset
M_{1}\subset\dots M_{r}=M$ is a composition series of M, then define the
set  of decomposition factors as $$\rm{DF}(M):=\{ M_{i}/M_{i-1}\}_{i=1}^{r},$$  
and let $c(M)$ be the number of decomposition factors. We will use the following
Lemma on the decomposition factors of R-modules.

\begin{lemma}\label{Prop16} Let M be an R-module.\\
 Let N be a submodule of M.
 Then\\
 \qquad $ (i)$ $\rm{DF}(M)=\rm{DF}(N)\cup \rm{DF}(M/N)$,\\
\qquad $(ii)$ $c(M)=c(N)+c(M/N).$
\end{lemma}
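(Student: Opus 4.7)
The plan is to deduce both statements simultaneously from the Jordan--H\"older theorem, treating $\mathrm{DF}(M)$ as a multiset of isomorphism classes so that the union in (i) is a disjoint/multiset union (which then immediately gives (ii) by counting). Implicitly one needs $M$ to have finite length; this is automatic in our application since every $M$ we consider is a subquotient of the holonomic module $M_{\alpha}^{\beta}$, and holonomic $A_n$-modules have finite length.

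First I would fix a composition series
\[
0 = N_0 \subset N_1 \subset \cdots \subset N_s = N
\]
of the submodule $N$, with simple quotients $N_i/N_{i-1}$. Then I would fix a composition series of the quotient $M/N$, which by the standard correspondence between submodules of $M/N$ and submodules of $M$ containing $N$ lifts uniquely to a chain
\[
N = P_0 \subset P_1 \subset \cdots \subset P_t = M
\]
with $P_j/P_{j-1} \cong (P_j/N)/(P_{j-1}/N)$ simple.

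Next I would splice these two chains to obtain the composition series
\[
0 = N_0 \subset N_1 \subset \cdots \subset N_s = P_0 \subset P_1 \subset \cdots \subset P_t = M
\]
of $M$, whose multiset of simple quotients is exactly the union of $\{N_i/N_{i-1}\}_{i=1}^s$ and $\{P_j/P_{j-1}\}_{j=1}^t$, i.e.\ of $\mathrm{DF}(N)$ and $\mathrm{DF}(M/N)$. Since the Jordan--H\"older theorem guarantees that any two composition series of $M$ produce the same multiset of factors (up to isomorphism and reordering), the set $\mathrm{DF}(M)$ computed from \emph{any} composition series agrees with the one obtained by the splice, which proves (i). Part (ii) is then immediate by taking cardinalities, since $c(M) = s + t = c(N) + c(M/N)$.

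The main potential obstacle is not really an obstacle but a clarification: one must decide on the precise meaning of $\mathrm{DF}(M)$ when the same simple factor occurs with multiplicity. The cleanest reading, and the one compatible with (ii), is that $\mathrm{DF}(M)$ is a multiset of isomorphism classes of simple subquotients, with $\cup$ in (i) meaning multiset union; under this convention both statements are just repackagings of Jordan--H\"older, and no additional argument is required.
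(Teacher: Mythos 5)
Your argument is correct and is precisely the standard one: splice a composition series of $N$ with the lift of a composition series of $M/N$ and invoke Jordan--H\"older, reading $\rm{DF}$ as a multiset (indexed family) of simple factors, which is exactly the reading the paper's definition $\{M_i/M_{i-1}\}_{i=1}^r$ intends. The paper states this lemma without proof as a well-known fact, so there is nothing to compare beyond noting that your proof, including the finite-length remark justified by holonomicity, is the intended standard argument.
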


\begin{cor}\label{Cor11} Let $0=M_{0}\subset M_{1}\subset...\subset M_{k}=M$ be a composition series of an $A_{n}$ module M and $0=N_{0}\subset N_{1}\subset...\subset N_{l}=N$ be a composition series of an $A_{m}$-module N. Then $$\rm{DF}(M\widehat{\otimes}N)=\{M_{i}/M_{i-1}\widehat{\otimes}N_{j}/N_{j-1}\}_{i=1,j=1}^{k,l}$$ and hence $c(M\widehat{\otimes}N)=c(M)c(N)$.
\end{cor}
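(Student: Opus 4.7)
The plan is to splice the two given composition series into a single composition series of $M\widehat{\otimes}N$ whose simple subquotients are precisely the external products of the simple subquotients of $M$ and of $N$. The key structural fact to exploit is that $\widehat{\otimes}$ is, underneath the $A_{n+m}$-action, the ordinary tensor product over the field $\bC$, and is therefore exact in each argument. The other main ingredient is Lemma~\ref{Prop14}, which provides the simplicity of each spliced factor.

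First I would use the filtration of $M$ to build the coarse chain
$$0 = M_{0}\widehat{\otimes}N \subset M_{1}\widehat{\otimes}N \subset \cdots \subset M_{k}\widehat{\otimes}N = M\widehat{\otimes}N.$$
Exactness of $-\widehat{\otimes}N$ gives short exact sequences
$$0\longrightarrow M_{i-1}\widehat{\otimes}N \longrightarrow M_{i}\widehat{\otimes}N \longrightarrow (M_{i}/M_{i-1})\widehat{\otimes}N\longrightarrow 0,$$
so the factor at step $i$ is $S_{i}\widehat{\otimes}N$ with $S_{i}:=M_{i}/M_{i-1}$ simple. Next I would refine each step by pulling back, via the quotient map $M_{i}\widehat{\otimes}N \twoheadrightarrow S_{i}\widehat{\otimes}N$, the filtration $S_{i}\widehat{\otimes}N_{0}\subset\cdots\subset S_{i}\widehat{\otimes}N_{l}=S_{i}\widehat{\otimes}N$ obtained from exactness of $S_{i}\widehat{\otimes}-$. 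This inserts intermediate $A_{n+m}$-submodules $F_{i,j}$ between $M_{i-1}\widehat{\otimes}N$ and $M_{i}\widehat{\otimes}N$ with quotients $F_{i,j}/F_{i,j-1}\cong S_{i}\widehat{\otimes}(N_{j}/N_{j-1})$. By Lemma~\ref{Prop14} each such external product of a simple $A_{n}$-module and a simple $A_{m}$-module is a simple $A_{n+m}$-module, so the spliced chain is a genuine composition series of $M\widehat{\otimes}N$ of length $kl$. The description of $\rm{DF}(M\widehat{\otimes}N)$ and the multiplicativity $c(M\widehat{\otimes}N)=c(M)c(N)$ then follow by iterating Lemma~\ref{Prop16}, with the usual appeal to Jordan--H\"older to ensure that $\rm{DF}$ is independent of the chosen composition series.

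The main obstacle has really already been removed: the nontrivial content sits entirely in Lemma~\ref{Prop14}. What remains requires only care about two bookkeeping points: interpreting $\rm{DF}$ as a multiset of isomorphism classes (so that repeated factors are counted with multiplicity), and remembering that the exactness of $\widehat{\otimes}$ in each variable is a direct consequence of its being a tensor product over a field, so that no $\Tor$ terms obstruct the pullback of filtrations used above.
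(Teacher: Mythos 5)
Your proposal is correct and is essentially the paper's own argument spelled out in detail: the paper's proof likewise rests on exactness (flatness) of the external tensor product to splice the two filtrations and on Lemma~\ref{Prop14} for the simplicity of each factor $M_{i}/M_{i-1}\widehat{\otimes}N_{j}/N_{j-1}$. Your extra bookkeeping (pulling back the refined filtration and invoking Jordan--H\"older) just makes explicit what the paper leaves implicit.
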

\begin{proof} It suffices to note that $M_{i}/M_{i-1}\widehat{\otimes} N_j/N_{j}$ is simple by Lemma~\ref{Prop14} and that taking exterior tensor product is a flat functor.
\end{proof}

\section{Examples}
\subsection{Normal Crossings}\label{s-4}
 Consider first the case when $m\leq n$ and recall that we have a blanket assumption that $\alpha_{1},...,\alpha_{m}$ are linearly independent. After a base change the linear forms may be taken to be  $\alpha_{1}=x_{1},...,\alpha_{m}=x_{m}$.  Recall that, in section~\ref{s-1} of this paper, we  considered the case $m=n=1$. Now using that we are going to treat the general case.

\begin{proposition}\label{Thm42}Let $M_{\alpha}^{\beta}=\bC[x]_{\alpha}\alpha^{\beta}, \alpha=x_{1}...x_{m}$ and $m\leq n$.
Assume that $\beta_{1},\dots,\beta_{k}\in\bZ$ and $\beta_{k+1},\dots,\beta_{m}\in\bC\setminus\bZ$. Then $c(M_{\alpha}^{\beta})=2^{k}$. In particular $M_{\alpha}^{\beta}$ is simple if and only if $\beta_{1},\dots,\beta_{m}\in\bC\setminus\bZ$.
\end{proposition}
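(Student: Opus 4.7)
The plan is to reduce the problem to the one-variable case via the external product construction of Section~2.2. Since $\alpha = x_1\cdots x_m$ with $m \le n$, the localization decomposes as a tensor product
$$\bC[x_1,\ldots,x_n]_\alpha = \bC[x_1]_{x_1} \otimes_\bC \cdots \otimes_\bC \bC[x_m]_{x_m} \otimes_\bC \bC[x_{m+1},\ldots,x_n],$$
and the twisted generator $\alpha^\beta = x_1^{\beta_1}\cdots x_m^{\beta_m}$ splits as a product of generators, one per factor. Using the definition of the $A_n$-action on $M_\alpha^\beta$ (Definition~\ref{Def11}) one checks that each $\partial_{x_j}$ acts only on the corresponding factor; therefore as an $A_n$-module
$$M_\alpha^\beta \;\cong\; M_{x_1}^{\beta_1} \,\widehat\otimes\, M_{x_2}^{\beta_2} \,\widehat\otimes\, \cdots \,\widehat\otimes\, M_{x_m}^{\beta_m} \,\widehat\otimes\, \bC[x_{m+1},\ldots,x_n],$$
where the last factor is interpreted as the trivial $A_{n-m}$-module (omitted if $m=n$).

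Next I would apply Corollary~\ref{Cor11} inductively to get
$$c(M_\alpha^\beta) \;=\; \prod_{i=1}^{m} c(M_{x_i}^{\beta_i}) \cdot c\bigl(\bC[x_{m+1},\ldots,x_n]\bigr).$$
The polynomial ring $\bC[x_{m+1},\ldots,x_n]$ is a well-known simple $A_{n-m}$-module, so the final factor equals $1$. For each one-variable factor, Proposition~\ref{Prop11} gives $c(M_{x_i}^{\beta_i}) = 2$ when $\beta_i \in \bZ$ and $c(M_{x_i}^{\beta_i}) = 1$ when $\beta_i \in \bC \setminus \bZ$. With exactly $k$ integer exponents among $\beta_1,\ldots,\beta_m$, the product collapses to $2^k$, proving the first assertion.

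The "in particular" statement is then immediate: $M_\alpha^\beta$ is simple iff $c(M_\alpha^\beta) = 1$ iff $2^k = 1$ iff $k = 0$, i.e.\ iff every $\beta_i$ lies in $\bC\setminus\bZ$. The only subtlety I expect is the verification of the external product isomorphism, but this is purely formal once one notes that distinct $x_i$'s and $\partial_{x_i}$'s act on disjoint tensor factors; that the simple $A_{n-m}$-module structure on $\bC[x_{m+1},\ldots,x_n]$ is used here (and is standard) should be mentioned explicitly for completeness.
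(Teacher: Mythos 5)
Your proof is correct and follows essentially the same route as the paper: the authors also decompose $M_\alpha^\beta$ via the multiplication map as an external product of the one-variable modules $\bC[x_i]_{x_i}x_i^{\beta_i}$ (and the polynomial ring in the remaining variables), then invoke Corollary~\ref{Cor11} together with the one-variable count from Proposition~\ref{Prop11} to obtain $2^k$.
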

\begin{proof}
 The multiplication map induces an isomorphism:
$$\bC[x]_{x_{1}...x_{m}}x_{1}^{\beta_{1}}...x_{m}^{\beta_{m}}\cong 
(\widehat{\otimes}_{i=1}^m\bC[x_{i}]_{x_{i}}x_{i}^{\beta_{i}})\widehat{\otimes}(\widehat{\otimes}_{i=m+1}^n\bC[x_{i}]).$$
Now $c(\bC[x_{i}]_{x_{i}}x_{i}^{\beta_{i}})$ is 2 or 1 according as
$\beta_{i}$ is an integer or not. The result then follows by Corollary \ref{Cor11}.
\end{proof}

\subsection{The case of $\bC[x]_\alpha$} \label{integer}By Proposition \ref{Prop12} this corresponds to $\beta\in \bZ^m$. In this case there is a known combinatorial description of $c(M_\alpha^\beta)$ in terms of certain linearily independent subsets of $\Theta:=\{\alpha_1,...,\alpha_m\}$. We will give a short description(for details see \cite{CDCP2}.)

Recall first that to each closed embedding $i:Y\to X$ between smooth varieties, such that $Y$ has codimension $d$, there is, by Kashiwaras theorem, associated a simple $\D_Y$-module $L_{Y/X}=i_*\Oc_Y=H^d_Y(\Oc_X)$. When $X=\bC^n$ and $0=Y=V(x_1,x_2,...,x_n)$ is a point, $L_{0/X}$ may be described as 
$$
L_{0/X}=\frac{\bC[x_1,...,x_n]_{x_1....x_n}}{\Sigma_{i=1}^n \bC[x_1,...,x_n]_{x_1...\hat x_i....x_n}}\cong \bigoplus_{\beta\in \bZ^n_-}\bC x^\beta ,
$$
where $\hat x_i$ signifies that the corresponding variable is not present in the expression, and $\bZ_-$ is the set of strictly negative integers. More generally, when $Y=V(x_1,x_2,...,x_m)$,  $L_{Y/X}$ may be described as the exterior tensor product of an $A_m$-module and an $A_{n-m}$-module:
$$L_{Y/X}\cong L_{0/\bC^m}\widehat {\otimes}\ \bC[x_{m+1},....,x_n]\cong \bigoplus_{\beta_1\leq -1,...,\beta_m\leq -1}\bC x^\beta.
$$ 
Note that this corresponds to choosing a vector space splitting $C^n=Y\times Z$, and describing
 $L_{Y/X}\cong L_{0/Z}\widehat {\otimes}\ L_{Y/Y}$, where $\bC[x_{m+1},....,x_n]\ =\Oc _Y=L_{Y/Y}$.
 We will now fix terminology associated to a hyperplane arrangement..
 	
\begin{definition}Let $R:=\bC[x_1,...,x_n]$. Furthermore let $\Theta=\{\alpha_1,...,\alpha_m\}$ be forms,  and if $S\subset \theta $ let $\alpha_S=\prod_{\alpha\in S}\alpha$ and $R_S:=R_{\alpha_S}$. The subset $S\subset \Theta$ defines the linear subspace $H_S$ where all the forms in $S$ vanish, i.e. the flat $H_S=V(\alpha\in S)$. 
Furthermore let $i_S: H_S\to \bC^n$ be the inclusion and $L_S={i_S}_*\Oc_{H_S}$ the corresponding simple $A_n$-module as above.
\end{definition}

The $L_S$ are the modules that occur in a decomposition series of $\bC[x]_{\alpha}$. Note that $L_{S_1}$ and  $L_{S_2}$ are isomorphic exactly when $H_{S_1}=H_{S_2}$. First we need the {\it polar filtration.}

\begin{definition}
Define $$R_i:=\Sigma_{\vert S\vert=i} R_S\subset \bC[x]_{\alpha} $$
(and $R_{-1}:=0)$).
Clearly $R_S$ is an $A_n$-submodule of $\bC[x]_{\alpha}$, and $\bC [x]=R_0\subset R_1\subset ....$, gives an $A_n$-module filtration of $\bC[x]_{\alpha}$.  
\end{definition}

\begin{thm}[\cite{CDCP2}] (i) dim supp $R_{i}/R_{i-1}= n-i$, for $i=0,...,n$.\\
(ii) $R_{i}/R_{i-1}$ is a semi-simple $A_n$-module.\\
(iii)$R_{i}/R_{i-1} \cong \oplus_{S} L_S$, where the sum is taken over a certain set, of all so-called {\it no-broken}(see \cite{CDCP2}) linearily independent subsets of $\Theta$ of cardinality $i$. The image of $L_S$ in $R_i/R_{i-1}$ is generated by $\alpha_S^{-1}.$
\end{thm}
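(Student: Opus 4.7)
The plan is to analyse the polar filtration via explicit generators, and to match the resulting identities in $\bC[x]_\alpha$ with the Orlik--Solomon / no-broken-circuit combinatorics.

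First, for each linearly independent $S\subset\Theta$ with $|S|=i$ I would construct an $A_n$-module homomorphism $\Phi_S\colon L_S\to R_i/R_{i-1}$ sending the canonical generator of $L_S$ to the class of $\alpha_S^{-1}$. Choosing coordinates so that $S=\{x_1,\ldots,x_i\}$, the presentation
$$L_S\;\cong\;\frac{\bC[x]_{x_1\cdots x_i}}{\sum_{j=1}^{i}\bC[x]_{x_1\cdots\hat x_j\cdots x_i}}$$
recalled in the excerpt shows that $\Phi_S$ is well defined, since each denominator summand lies in $R_{i-1}$. Simplicity of $L_S$ (Kashiwara) forces $\Phi_S$ to be either zero or injective; it is nonzero because $\alpha_S^{-1}$ has genuine poles along all $i$ hyperplanes of $S$, whereas any element of $R_{i-1}$ is a polynomial combination of $\alpha_{S'}^{-k}$ with $|S'|\leq i-1$ and hence has poles along at most $i-1$ of them.

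Second, I would show that the $\Phi_S$ jointly surject onto $R_i/R_{i-1}$. As an $A_n$-module $R_i$ is generated by $\{\alpha_S^{-1}:|S|\leq i\}$ (derivations raise pole order and polynomials act on $R_i$), and modulo $R_{i-1}$ only the classes with $|S|=i$ remain. For each circuit $C=\{\alpha_{i_0},\ldots,\alpha_{i_i}\}\subset\Theta$ (a minimally dependent $(i+1)$-subset) with defining relation $\sum_j c_j\alpha_{i_j}=0$, multiplying through by $\alpha_C^{-1}$ yields the Orlik--Solomon identity
$$\sum_{j=0}^{i}c_j\,\alpha_{C\setminus\{\alpha_{i_j}\}}^{-1}=0\qquad\text{in }\bC[x]_\alpha,$$
a $\bC$-linear relation among $\alpha_{S'}^{-1}$ for linearly independent size-$i$ subsets $S'\subset C$. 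Fixing a linear order on $\Theta$ and iterating such identities expresses $\alpha_T^{-1}$ for any broken or linearly dependent size-$i$ subset $T$ in terms of $\alpha_S^{-1}$ with $S$ no-broken of size $i$; this is the standard Orlik--Solomon reduction. Combined with the previous paragraph, it gives a surjection $\bigoplus_{S\text{ no-broken, }|S|=i}L_S \twoheadrightarrow R_i/R_{i-1}$.

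Finally, this surjection must be shown to be an isomorphism. The cleanest way is to compare multiplicities: using the good filtration underlying Proposition~\ref{Prop12}(iii) one computes that the multiplicity of $R_i/R_{i-1}$ equals the number of no-broken-circuit $i$-subsets of $\Theta$, which matches the right-hand side since each $L_S$ has multiplicity one. Alternatively, any nontrivial relation $\sum_S c_S\alpha_S^{-1}\equiv 0\pmod{R_{i-1}}$ among no-broken classes would, after clearing denominators, contradict the basis property of no-broken-circuit monomials for the top Orlik--Solomon algebra of the arrangement. Parts (ii) and (i) then follow at once: a direct sum of simples is semisimple, and each $L_S$ is supported on the codimension-$i$ flat $H_S$, whose dimension is $n-i$. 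The main obstacle is precisely this last injectivity/multiplicity step --- translating the matroid-theoretic no-broken basis theorem into the present $D$-module language --- which is the combinatorial heart of \cite{CDCP2}; Steps~1 and~2 are essentially formal once the structure of $L_S$ and the partial-fraction identities are in hand.
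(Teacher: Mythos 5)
You should first note that the paper does not prove this theorem at all: it is quoted from \cite{CDCP2} (the polar filtration and its NBC decomposition are De Concini--Procesi's results), so there is no in-paper argument to match your outline against. Judged on its own, your outline follows the expected route (maps $\Phi_S$ from the simple modules $L_S$, surjectivity via circuit/partial-fraction identities, then an independence statement indexed by no-broken sets), but the two steps that actually carry the theorem are not established. First, the nonvanishing of $\Phi_S$ rests on the claim that an element of $R_{i-1}$ ``has poles along at most $i-1$'' of the hyperplanes of $S$; that is false for sums: $\tfrac1x+\tfrac1y=\tfrac{x+y}{xy}$ already has poles along two hyperplanes although each summand lies in an $R_{S'}$ with $|S'|=1$. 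So pole counting does not separate $\alpha_S^{-1}$ from $R_{i-1}$; you need a genuine invariant (an iterated residue/Laurent-coefficient argument along the flat $H_S$, or the NBC basis of $\bC[x]_\alpha$ itself), i.e.\ exactly the content you are trying to prove.

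Second, the final injectivity/semisimplicity step is asserted rather than proved. The suggested multiplicity computation cannot be extracted from Proposition~\ref{Prop12}(iii): the filtration there only yields the \emph{upper bound} $(m+1)^n$ on multiplicity, not the characteristic-cycle count by NBC subsets that you would need to conclude the surjection $\bigoplus_S L_S\twoheadrightarrow R_i/R_{i-1}$ is an isomorphism. The alternative appeal to ``the basis property of no-broken-circuit monomials for the Orlik--Solomon algebra'' conflates relations in the cohomology (OS) algebra with relations among the classes $\alpha_S^{-1}$ modulo $R_{i-1}$ inside the $D$-module $\bC[x]_\alpha$; bridging these is precisely the theorem of \cite{CDCP2} being cited, so as written the argument is circular at its key point. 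In short: Steps 1--2 are fine modulo the pole-counting slip, but the heart of the statement (that the NBC classes are nonzero and independent in $R_i/R_{i-1}$, equivalently the exact multiplicity of each $L_S$) is deferred to the very reference the theorem is quoted from, which is consistent with how the paper treats it but does not constitute a proof.
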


\begin{remark}  The polar filtration is the ordinary dimension filtration of a module over a noetherian commutative ring, only that the commutative ring in this case is the subalgebra $\bC[\partial]$ of constant differential operators. Since the module is holonomic, its  characteristic varieties have dimension $n$ over the symbol algebra. Hence this filtration by increasing dimension induces a filtration over $\bC[x]$ where the graded pieces have decreasing dimension.
\end{remark}

 If $n=2$ we have the following sequence of $A_{2}$-modules $$0\rightarrow R_{0}(=\bC[x,y])\subset R_{1}\subset R_{2}=\bC [x]_\alpha.$$ If the forms are $\Theta=\{\alpha_1,....,\alpha_m\}$, then the set of no-broken linearily independent subsets is $$\{\emptyset,\{\alpha_1\},...,\{\alpha_m\}, \{\alpha_1,\alpha_2\},...,\{\alpha_1,\alpha_m\}\}.$$
 So in the plane case $\bC [x]_\alpha$ decomposes in the following way.
 \begin{cor}\label{integer.cor} (Plane case) (i) There is one unique simple factor $\bC[x,y]$ with support equal to $\bC^2$.\\
 (ii) The simple factors with support on lines are given by the $m$ non-isomorphic modules in the direct sum decomposition
 $$R_{1}/R_0=\bigoplus_{i=1}^m L_{\alpha_i}$$\\
( iii) There are $m-1$ isomorphic simple decomposition factors with support at the origin:
\begin{equation}
\label{eq: L}
R_{2}/R_1=\bigoplus_{i=2}^m L_{\{\alpha_1,\alpha_i\}}\end{equation}\\
(iv) $c(\bC[x]_{\alpha})=2m$. 
  \end{cor}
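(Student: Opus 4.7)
The plan is simply to specialise the cited Theorem of \cite{CDCP2} to the case $n=2$ and then enumerate, grade by grade, the no-broken linearly independent subsets $S\subset\Theta$ that index the simple summands of $R_i/R_{i-1}$.

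First I would handle $i=0$. Since $R_{-1}=0$, we have $R_0/R_{-1}=R_0=\bC[x,y]$, corresponding to $S=\emptyset$; this is a single simple $A_2$-module with support all of $\bC^2$, giving (i). For $i=1$, a one-element set $\{\alpha_j\}$ is (trivially) linearly independent, and since broken circuits arise from dependent sets of size $\geq 2$, every singleton is no-broken. Thus
\[
R_1/R_0\cong\bigoplus_{j=1}^m L_{\{\alpha_j\}}.
\]
The support of $L_{\{\alpha_j\}}$ is the line $V(\alpha_j)$, and distinct forms (up to scalar) give distinct lines, so the $m$ simple factors are pairwise non-isomorphic. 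This yields (ii).

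The key combinatorial step is the identification of the no-broken $2$-element subsets for (iii). Fix the order $\alpha_1<\alpha_2<\cdots<\alpha_m$ on $\Theta$. Because $n=2$, every triple of linear forms on $\bC^2$ is linearly dependent. Hence for any pair $\{\alpha_i,\alpha_j\}$ with $2\leq i<j$, the triple $\{\alpha_1,\alpha_i,\alpha_j\}$ is a circuit, and deleting its smallest element $\alpha_1$ exhibits $\{\alpha_i,\alpha_j\}$ as a broken circuit. Consequently the only no-broken linearly independent $2$-subsets are the $m-1$ pairs $\{\alpha_1,\alpha_i\}$, $i=2,\dots,m$. Each such pair is linearly independent, hence $H_{\{\alpha_1,\alpha_i\}}=\{0\}$, so $L_{\{\alpha_1,\alpha_i\}}\cong L_{0/\bC^2}$, and the graded piece becomes as in \eqref{eq: L}. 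This gives (iii).

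Finally, (iv) is obtained by summing the contributions: $c(\bC[x]_\alpha)=1+m+(m-1)=2m$. The only nontrivial step is the enumeration of no-broken $2$-subsets, and the main obstacle is simply recognising that the dimension constraint $n=2$ forces every triple of forms to be dependent, which dramatically restricts the NBC subsets of size $2$; once this is seen, the rest is bookkeeping.
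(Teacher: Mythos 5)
Your proposal is correct and follows essentially the same route as the paper: the paper also derives the corollary by specialising the cited theorem of \cite{CDCP2} to $n=2$ and simply listing the no-broken linearly independent subsets $\{\emptyset,\{\alpha_1\},\dots,\{\alpha_m\},\{\alpha_1,\alpha_2\},\dots,\{\alpha_1,\alpha_m\}\}$, whereas you additionally spell out why only the pairs containing $\alpha_1$ survive (every triple in two variables is dependent, so any pair omitting $\alpha_1$ is a broken circuit), which is a harmless elaboration resting on the standing assumption that the forms are pairwise non-proportional. The counts and support statements, $1+m+(m-1)=2m$, agree with the paper.
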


For example, the image of the decomposition factor $L_S$ corresponding to $S=\{\alpha_1,\alpha_2\}$ in $R_2/R_1$ is just $$Im L_{\{\alpha_1,\alpha_2\}}=\bigoplus_{\beta_1\leq -1,\beta_2\leq -1}\bC \alpha_1^{\beta_1} \alpha_2^{\beta_2}.$$

We will need to know the image of $L_{\alpha_i}$. It may be described as follows. 
Choose for each $\alpha_i$ either $\alpha^c_i=x$ or $\alpha^c_i=y$, so that $\alpha_i, \alpha^c_i$ are linearily independent.
Then the image of $L_{\alpha_i}$ in $R_1/R_0$ is the vectorspace 
\begin{equation}
\label{eq:L2}
Im L_{\alpha_i}=\bigoplus_{k\geq 0,\ l\leq -1}\bC { (\alpha^c_i)}^k\alpha_i^l.
\end{equation}

\section{Proof of Theorem \ref{MT} when all
$\beta_{i}\in\bC\setminus\bZ$}\label{s-6} 
 This section consists of the proof of a special case of Theorem \ref{MT}, to which the general case may be reduced.
By a change of coordinates assume in the sequel that
 $$M_{\alpha}^{\beta}=\mathbb{C}[x,y]_{\alpha}\alpha^{\beta},$$ where
 $\alpha=xy\prod_{i=3}^{m}(c_ix+y)$, and $c_{i}\neq c_{j}\neq 0$ for $i\neq j$. We will continue to call the forms $\alpha_1=x,\alpha_2=y,....$, and extend the definition of $c_i=\partial_x(\alpha_i)$(so $c_0=1,\ c_1=0)$. Note that the case when $m\leq 2$ was treated in Proposition \ref{Thm42}. With no $\beta_i$ integers, it turns out that whether  $\vert\beta\vert:=\sum_{i=1}^{m}\beta_{i}$ is an integer or not,  determines whether the module is simple.
 
  \begin{thm}\label{Thm61}  
 Assume that $\beta_{i}\in\bC\setminus\bZ$, $i=1,...,m. $\\(ii) If $\vert\beta\vert:=\sum_{i=1}^{m}\beta_{i}\in\bZ$, then  $c(M_{\alpha}^{\beta})=m-1$.
\\ (ii) If $\vert\beta\vert\in \bC\setminus\bZ$ , then
$c(M_{\alpha}^{\beta})=1$ and $M_{\alpha}^{\beta}$ is simple.
 \end{thm}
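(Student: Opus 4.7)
My plan centers on the Euler operator $E = x\partial_x + y\partial_y$, which for any linear form $\alpha_i$ satisfies $E\alpha_i = \alpha_i$, hence $E\alpha^\beta = |\beta|\alpha^\beta$ and $E - |\beta| \in \mathrm{Ann}_{A_2}(\alpha^\beta)$. Consequently $E$ acts semisimply on $M_\alpha^\beta$ with eigenvalues in $|\beta| + \bZ$: the $(|\beta|+k)$-eigenspace is $V_k \cdot \alpha^\beta$, where $V_k \subset \bC[x,y]_\alpha$ is the space of degree-$k$ homogeneous rational functions. This decomposition is respected by the $A_2$-action, and any $A_2$-submodule $N \subset M_\alpha^\beta$ is $E$-graded: for $f = \sum_k f_k \in N$, applying Vandermonde to $f, Ef, E^2f, \dots$ forces each $f_k \in N$.

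For part (ii), assuming $|\beta| \notin \bZ$, I would show every nonzero $f$ generates $M_\alpha^\beta$. By the grading remark, I may take $f$ homogeneous, $f \in V_k\cdot\alpha^\beta$. The first sub-step is to bring $f$ down to a nonzero scalar multiple of $\alpha^\beta$. For this I would exhibit operators $P_\gamma \in A_2$ realizing identities $P_\gamma \alpha^{\beta+\gamma} = b_\gamma(\beta)\alpha^\beta$ for suitable $\gamma \in \bZ^m$, where $b_\gamma$ is a Bernstein--Sato-type scalar polynomial in $\beta$ whose integer roots, viewed as functions of $|\beta|$, mark the failure locus. Under the hypothesis $|\beta| \notin \bZ$, $b_\gamma(\beta) \neq 0$, so $\alpha^\beta \in A_2\cdot f$. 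The second sub-step is to show $A_2\cdot\alpha^\beta = M_\alpha^\beta$. Since $M_\alpha^\beta$ is spanned as a $\bC$-space by $\{x^a y^b\alpha^{\beta-\gamma} : a,b \geq 0,\ \gamma \in \bZ_{\geq 0}^m\}$ and $\bC[x,y] \subset A_2$, it suffices to produce each $\alpha^{\beta-\gamma}$, which I would do inductively on $|\gamma|$ using the identities
$$\partial_x\alpha^\beta = \sum_{i=1}^m \beta_i c_i \alpha^{\beta-e_i}, \qquad \partial_y\alpha^\beta = \sum_{i=1}^m \beta_i \alpha^{\beta-e_i},$$
combined with pre-multiplications by the forms $\alpha_j$ (to cancel unwanted poles) and higher-order partials, to isolate each individual $\alpha^{\beta-e_i}$. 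Distinctness of the $c_i$'s and the assumption $\beta_i \notin \bZ$ (which persists through the induction because $\beta_i - \gamma_i \neq 0$) keep the relevant coefficient matrices invertible.

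For part (i), when $|\beta| \in \bZ$, the Bernstein coefficient $b_\gamma(\beta)$ in the first sub-step vanishes for a specific $\gamma$, obstructing the reduction to $\alpha^\beta$ from certain weight spaces. The resulting nonzero quotient of $M_\alpha^\beta$ is supported at the origin, and I would decompose it into copies of the simple point module $L_{\{0\}}$ along the lines of Section~\ref{integer}. Counting the failure instances in the Bernstein identities, one finds $m-2$ point factors in addition to a single ``big'' simple factor (the minimal extension of the rank-1 connection $M_\alpha^\beta|_X$), yielding the total $m-1$.

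The main obstacle will be producing the operators $P_\gamma$ and identifying their scalar coefficients $b_\gamma$ explicitly as polynomials whose integer roots in $|\beta|$ control the dichotomy between parts (i) and (ii). This is a multi-dimensional analogue of a Bernstein--Sato polynomial computation adapted to the hyperplane arrangement, and the non-obvious content is that the decisive integrality condition is on $|\beta| = \sum \beta_i$ rather than on any of the finer arithmetic of the individual $\beta_i$.
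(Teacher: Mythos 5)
Your skeleton (Euler grading, descent/ascent between the modules $A_2\alpha^{\beta+\gamma}$, a dichotomy governed by $\vert\beta\vert$) points in the right direction, but the two steps you defer are precisely the content of the theorem, and one of your stated justifications is actually inconsistent with part (i). First, an arbitrary homogeneous element of $M_\alpha^\beta$ is of the form $\frac{p(x,y)}{\alpha^N}\alpha^\beta$ with $p$ homogeneous, not a monomial $\alpha^{\beta+\gamma}$, so operators $P_\gamma$ with $P_\gamma\alpha^{\beta+\gamma}=b_\gamma(\beta)\alpha^\beta$ do not by themselves reduce a general homogeneous $f$ to $\alpha^\beta$; you never say how to pass from such an $f$ to a monomial generator. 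The paper circumvents exactly this by never taking a general element: it uses holonomicity (Lemma \ref{Lem64}) to produce a monomial $\alpha^{\beta-N_1}$ that generates $M_\alpha^\beta$ and a monomial $\alpha^{\beta+N_3}$ lying in a simple submodule, and then the whole problem becomes comparing cyclic modules $A_2\alpha^{\tilde\beta}$ and $A_2\alpha_i\alpha^{\tilde\beta}$. Second, your ``main obstacle'' remark concedes that the operators $P_\gamma$ and the scalars $b_\gamma(\beta)$ are not produced; but proving that the quotient $A_2\alpha^{\tilde\beta}/A_2\alpha_i\alpha^{\tilde\beta}$ is zero exactly when $\vert\tilde\beta\vert+1\notin\{-1,\dots,-(m-2)\}$ (and is the simple point module otherwise) is the bulk of the paper's work: it requires knowing the weight-zero part of $\mathrm{Ann}(\alpha^{\tilde\beta})$, which is established via the explicit generators $P$ and $Q$, a normal-form (Gr\"obner) computation (Lemma \ref{normalform}), and the valuation argument along each line $\alpha_i=0$ (Lemma \ref{Prop61}). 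Without an argument here there is no proof, only a restatement of what must be shown.

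Moreover, your justification for sub-step (b) --- that the coefficient matrices in the descent from $\alpha^\beta$ to the $\alpha^{\beta-e_i}$ are invertible because the $c_i$ are distinct and $\beta_i-\gamma_i\neq 0$ --- cannot be right as stated, because it makes no reference to $\vert\beta\vert$. If such a descent worked under those hypotheses alone, then $x\alpha^{\tilde\beta_1}$ would generate $\alpha^{\tilde\beta_1}$ for every $\tilde\beta_1$ with all entries non-integral; but the paper's part (i) chain $A_2\alpha^{\tilde\beta_1}\supsetneq A_2x\alpha^{\tilde\beta_1}\supsetneq\cdots$ with $\vert\tilde\beta_1\vert=-(m-1)$ shows this fails at exactly $m-2$ integer values of the total degree. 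So the obstruction to your induction is an integrality condition on $\vert\beta\vert-\vert\gamma\vert$, not on the individual $\beta_i$, and detecting it is again equivalent to the annihilator computation you have postponed. Finally, in part (i) the count of $m-2$ point-supported factors and the simplicity of the remaining factor (the paper gets these from Lemma \ref{Lem63} applied to each form together with Lemma \ref{Lem64}) are asserted without argument. In short: correct intuition, but the decisive lemmas are missing and one invertibility claim is false as formulated.
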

 Studying the annihilator of $\alpha^{\beta}$ is the crucial ingredient of the proof.

\subsection{Normal form algoritm}\label{ss-62}

The Euler derivation  $P=P(\beta)=x\partial_{x}+y\partial_{y}-\vert\beta\vert$ belongs to the annihilator of $\alpha^{\beta}$. Another annihilator is found by considering the action of $\partial_y$:
$$
(\partial_y-\sum_{i=2}^{m}\frac {\beta_i}{\alpha_i})\alpha^{\beta}=0
$$
 and clearing denominators. This gives 
 $$
 Q=Q(\beta):=(\prod_{j=2}^{m}\alpha_{j})\partial_{y}-\sum_{i=2}^{m} \beta_{i}\prod_{j=2,j\neq i}^{m}\alpha_{j}.
 $$
 (the similarily constructed annihilator involving $\partial_x$ is contained in the ideal generated by $P$ and $Q$).  We will next describe a normal form for elements modulo $P$ and $Q$ .
For 
this use the \textit{graded reverse lexicographic order}, ordering the variables by $y>x>\partial_{x}>\partial_{y}$, and letting 
$$y^{i_1}x^{j_1}\partial_{x}^{k_1}\partial_{y}^{l_1}>y^{i_2}x^{j_2}\partial_{x}^{k_2}\partial_{y}^{l_2}$$ if $$i_1+j_1+k_1+l_1>i_2+j_2+k_2+l_2$$ or $$i_1+j_1+k_1+l_1=i_2+j_2+k_2+l_2  $$ and the last non-zero coordinate of 
$(i_1-i_2,j_1-j_2,k_1-k_2,l_1-l_2)$ is negative. The $normalform$ algorithm, see \cite[Chapter 1]{SST} and \cite[Chapter 2]{CLO}, with respect to $\{ P,Q\}$, inputs an element $F$ of the Weyl algebra  and outputs an element $R$ such that there exist $S_1$ and $S_2$ in the Weyl algebra with $F=S_1P+S_2  Q+R$ and where  the initial term of $R$ is not divisible by the initial terms of $P$ and $Q$. Since the initial term of $P$ is $\underline{x\partial_{x}}$ and the initial term of $Q$ is $\underline{y^{m-1}\partial_{y}}$, it follows that 
$$A_{2}=A_{2}P+A_{2}Q+N$$
 where $$N=\oplus_{(i,j,k,l)\in M}\bC y^{i}x^j \partial_{x}^{k}\partial_{y}^{l}$$ 
 and $M\subset\bZ_{\geq0}^{4}$, is the set 
\begin{equation}
\label{eq:normalform}
M=\{(i,j,k,l):\ jk=0,\ l\neq 0\implies i\leq m-2\}.
\end{equation}
 We will use this in a slightly different form. Give the the variables $x,y\in A_2$ weight $1$, and the  derivations $\partial_x,\partial_y$ weight $-1$. Then both $P$ and $Q$ are homogeneous elements. Denote by $(A_2)_0\subset A_2$ the space of homogeneous differential operators of weight $0$.
 
 \begin{lemma}
 \label{normalform}
 $$
 (A_2)_0\subset N_0+A_2P+A_2Q
 $$
  where $N_0$ is the vector space 
  \begin{equation}L=\bigoplus_{1\leq k}\bC (y\partial_{x})^{k}\oplus\bigoplus_{1\leq k,l; \ k+ l\leq m-2}\bC (y\partial_{x})^{k}(y\partial_{y})^{l}\oplus\bigoplus_{l\leq m-2;\  0\leq k}\bC (y\partial_{y})^{l}(x\partial_{y})^{k}.
  \end{equation}
\end{lemma}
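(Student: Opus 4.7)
The plan is to combine the normal form decomposition $A_2 = A_2P + A_2Q + N$ recalled just above the lemma with the observation that both $P$ and $Q$ are homogeneous of weight~$0$ under the grading in which $x,y$ have weight $1$ and $\partial_x,\partial_y$ have weight $-1$. This makes the left ideal $A_2P + A_2Q$ graded. Given any $F\in(A_2)_0$, writing $F = S_1P + S_2Q + R$ with $R\in N$ and then extracting the weight-$0$ components (using that $P$ and $Q$ are themselves of weight $0$) yields $F \equiv R_0 \pmod{A_2P + A_2Q}$ with $R_0\in N\cap (A_2)_0$. Thus the lemma reduces to the inclusion $N\cap (A_2)_0 \subset L$, which I will actually prove as an equality.

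First I would enumerate a monomial basis of $N\cap (A_2)_0$. From \qr{eq:normalform}, the monomials $y^ix^j\partial_x^k\partial_y^l$ in $N$ are precisely those with $jk=0$ and with $i\le m-2$ whenever $l\ne 0$; weight~$0$ adds $i+j=k+l$. A split on $j=0$ versus $k=0$ then yields three families: (a) $y^k\partial_x^k$ with $k\ge 0$; (b) $y^{k+l}\partial_x^k\partial_y^l$ with $k\ge 0$, $l\ge 1$, $k+l\le m-2$; (c) $y^ix^j\partial_y^{i+j}$ with $j\ge 1$, $i\le m-2$.

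Next I would compute the initial monomials of the generators of $L$. Using $[y,\partial_x]=0$ and $[x,\partial_y]=0$ gives $(y\partial_x)^k = y^k\partial_x^k$ and $(x\partial_y)^k = x^k\partial_y^k$ exactly, while a short induction based on $(y\partial_y)\,y^s\partial_y^s = y^{s+1}\partial_y^{s+1} + sy^s\partial_y^s$ shows $(y\partial_y)^l = y^l\partial_y^l + (\text{strictly lower total degree})$. Combining these, the initial monomials of the three summands of $L$ are $y^k\partial_x^k$ for $k\ge 1$; $y^{k+l}\partial_x^k\partial_y^l$ for $k,l\ge 1$, $k+l\le m-2$; and $y^l x^k\partial_y^{l+k}$ for $0\le l\le m-2$, $k\ge 0$. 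A case-by-case match against (a)--(c) shows that these initial monomials biject with the monomial basis of $N\cap (A_2)_0$; note in particular that the scalar~$1$, the monomials $y^l\partial_y^l$ with $l\le m-2$, and the monomials $x^k\partial_y^k$ all arise from the third summand of $L$ (with $l=k=0$, with $k=0$, and with $l=0$ respectively).

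To finish I would observe that the lower-order terms produced when expanding each generator of $L$ still satisfy the $N$-conditions (their exponents are obtained by lowering $l$, which only relaxes the inequality $i\le m-2$), so every generator of $L$ genuinely lies in $N\cap (A_2)_0$. Since the initial monomials are pairwise distinct the generators of $L$ are linearly independent, and working in each finite-dimensional graded piece of fixed total degree, the bijection of initial monomials with the monomial basis forces $L = N\cap (A_2)_0$ by a standard upper-triangular argument. The main hazard I expect is the bookkeeping of the case split: one must ensure that the boundary cases $k=0$ or $l=0$ in (a)--(c) are each accounted for by exactly one of the three summands of $L$, without double counting and without missing the scalar $1$.
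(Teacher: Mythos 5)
Your argument is correct and is essentially the paper's own proof: both use the homogeneity of $P$ and $Q$ to extract the weight-zero component of the normal-form decomposition $A_2=A_2P+A_2Q+N$, and then rewrite the weight-zero monomials of $N$ as (initial terms of) products of $y\partial_x$, $y\partial_y$, $x\partial_y$ --- you simply carry out in detail the unitriangular change of basis that the paper's phrase ``changing the presentation of the monomials in $N_0$'' leaves implicit. One small slip: $Q$ is homogeneous of weight $m-2$, not of weight $0$, but since only homogeneity of $P$ and $Q$ (hence gradedness of $A_2P+A_2Q$) is needed for the componentwise extraction, this does not affect the validity of your proof.
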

\begin{proof}Since $P$ and $Q$ are homogeneous elements $$(A_{2})_0=(A_{2}P)_0+(A_{2}Q)_0+N_0$$
 where $N_0=N\cap (A_2)_0$ differs from $N$, by the added condition that $i+j-k-l=0$ in (\ref{eq:normalform}).  Using that $(A_2)_0$ is generated by monomials in $x\partial_{y}, x\partial_{x}, y\partial_{y}$ and $y\partial_{x}$,  and changing the presentation of the monomials in $N_0$ gives the description of the monomials in the Lemma.
 \end{proof} 
 
 \subsection{ The annihilator of $\alpha^{\beta}$}
   
  Now we can get some information on the annihilator of $\alpha^{\beta}$. 
  
\begin{lemma}\label{Prop61}Assume that $\beta_{i}\in\bC\setminus\bZ,\  i=1,...,m.$
\begin{enumerate} 
\item Let $ \rm{Ann}=\rm{Ann}_{A_{2}}\alpha^{{\beta}}$. Then $ A_2P+A_2Q\subset \rm{Ann}$.
\item The homogeneous part  of $ \rm{Ann}$
$$ \rm{Ann}_0\subset A_2P+A_2Q.$$
\end{enumerate}
\end{lemma}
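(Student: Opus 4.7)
The plan is to handle part (1) by direct computation, and to reduce part (2) via Lemma~\ref{normalform} to a one-variable injectivity statement.

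For part (1) I will use Definition~\ref{Def11} to compute the action of $\partial_x$ and $\partial_y$ on $\alpha^{\beta}$. Since $\alpha_1=x$ forces $\partial_y\alpha_1=0$ while $\partial_y\alpha_i=1$ for $i\geq 2$, I get $\partial_y\alpha^\beta = \bigl(\sum_{i\geq 2}\beta_i/\alpha_i\bigr)\alpha^\beta$, and clearing denominators yields $Q\alpha^\beta=0$. A parallel computation for $\partial_x$, combined with the identities $c_ix+y=\alpha_i$ for $i\geq 3$ together with $\alpha_1=x$, $\alpha_2=y$, shows that $(x\partial_x+y\partial_y)\alpha^\beta = |\beta|\alpha^\beta$, so $P\alpha^\beta=0$.

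For part~(2), take $R\in\mathrm{Ann}_0$. By Lemma~\ref{normalform} I can write $R = R_0 + SP + TQ$ with $R_0\in N_0$. Since $P$ and $Q$ both annihilate $\alpha^\beta$ by part~(1), this forces $R_0\alpha^\beta = 0$, and it remains to prove that the evaluation map $\Phi\colon N_0 \to M_\alpha^\beta$, $R_0\mapsto R_0\alpha^\beta$, is injective. To do so, I will pass to the $|\beta|$-eigenspace of the Euler operator $E = x\partial_x+y\partial_y$, where $\Phi$ lands by homogeneity. The substitution $t = y/x$ identifies this eigenspace with $\bC[t]_{\tilde\alpha(t)}\cdot \tilde\alpha^{\tilde\beta}$, where $\tilde\alpha(t) = t\prod_{i=3}^m(c_i+t)$ and $\tilde\alpha^{\tilde\beta} = t^{\beta_2}\prod_{i\geq 3}(c_i+t)^{\beta_i}$. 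A short calculation then shows that the three basic weight-$0$ operators restrict as $x\partial_y \leftrightarrow D$, $y\partial_y \leftrightarrow tD$ and $y\partial_x \leftrightarrow -t^2 D + |\beta|t$, where $D = \partial_t + L(t)$ is the twisted derivative with $L(t) = \sum_{i\geq 2}\beta_i/\tilde\alpha_i(t)$.

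The hard part, and the principal obstacle, will be proving linear independence in $\bC[t]_{\tilde\alpha}\cdot\tilde\alpha^{\tilde\beta}$ of the images under $\Phi$ of the three basis families of monomials in $N_0$. My plan is to filter $\bC[t]_{\tilde\alpha}$ by pole orders at $t=0$ and $t=-c_i$, $i\geq 3$, and analyze leading terms: iterating $D$ and $tD$ strictly raises pole order at each $\tilde\alpha_i = 0$ with leading coefficients built from polynomials in the $\beta_i$, while $-t^2 D + |\beta|t$ has a distinct effect mixing polynomial growth in $t$ with controlled poles. The restriction $k+l\leq m-2$ in the second family, and the same bound on $l$ in the third, reflect the fact that these operators can be reordered using $P$ and $Q$ only up to that point. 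The hypothesis $\beta_i\notin\bZ$ is precisely what forces the relevant leading coefficients in this pole expansion to be nonzero, ruling out cancellations between the three families and giving the required linear independence.
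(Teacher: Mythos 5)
Your part (1) is fine, and your first reduction in part (2) is exactly the paper's: write $R=R_0+SP+TQ$ via Lemma~\ref{normalform}, use (1) to get $R_0\alpha^\beta=0$, and reduce everything to the claim that the evaluation map $N_0\to M_\alpha^\beta$, $U\mapsto U\alpha^\beta$, is injective. Your passage to the Euler eigenspace via $t=y/x$, with $x\partial_y\leftrightarrow D=\partial_t+L(t)$, $y\partial_y\leftrightarrow tD$, $y\partial_x\leftrightarrow -t^2D+|\beta|t$, is a legitimate reformulation; it is essentially equivalent to the paper's device of valuations $O_L$ along the forms $L=\alpha_i$ (pole order at $t=0$, $t=-c_i$, and behaviour at $t=\infty$ corresponding to the form $x$).

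The genuine gap is that this injectivity, which is the entire content of part (2), is never proved: your last paragraph is a plan (``filter by pole orders and analyze leading terms \dots ruling out cancellations between the three families''), and the assertion of no cancellation is precisely the delicate point. Distinct monomials of $N_0$ of the same order do contribute to the same leading pole terms, so nonvanishing of individual coefficients such as $\beta_i(\beta_i-1)\cdots(\beta_i-r+1)$ is not enough; one must show that the resulting system of leading-coefficient conditions has only the trivial solution. In the paper this is done by an induction on the order $r$ of $U$ and a counting argument that uses the shape of $N_0$: for $r\geq m-1$ the top-order symbol is $\gamma\,y^r\partial_x^r+R(x,y)x^{r-(m-2)}\partial_y^r$ with $\deg R\leq m-2$, and evaluating modulo $V_{L,-r+1}$ for $L=x$ and for each $\alpha_i$, $i=2,\dots,m$, forces $\gamma=0$ and forces $R$ to be divisible by $m-1$ pairwise non-proportional forms, hence $R=0$; for $r\leq m-2$ one similarly gets a polynomial $p$ of degree $\leq m-3$ vanishing at the $m-2$ distinct values $c_i$, hence $p=0$. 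Your sketch identifies neither this descent on the order nor the degree-versus-number-of-points count (which is where the bounds $k+l\leq m-2$ and $l\leq m-2$ in $N_0$ are actually used, not merely as artifacts of the normal form), so as it stands the key step is asserted rather than proved.
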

\begin{proof} Note that (i) does not need the hypothesis and is a routine verification. We will have use for the {\it valuation} with respect to the form $L=\alpha_i,\ i=1,2,...,m$.
 If
  $m\in M_{\alpha}^{\beta}$, define $O_{L}(m)$ to be the greatest $k\in \bZ$ such that $$m=r(x,y)L^{k}\alpha^{\beta}$$ where $r(x,y)\in \bC (x,y)$ is a quotient of two polynomials that are not divisible by $L$(using that the ring of polynomials is an UFD). Furthermore define 
 $$V_{L, k}=\{ m\in M_\alpha^\beta : O_L(m)\geq k\}.
 $$
 Clearly $...\supset V_{L, -1}\supset V_{L,0}\supset V_{L,1}\subset ....$
 To avoid confusion, we emphasize that when we below speak of the {\it order} of a differential operator in $A_2$ and the {\it order filtration} of $A_2$, it will be in the ordinary sense of differential operators. 
  Note that if $S\in A_2$ has order $s$, then
\begin{equation}
\label{Order0}SV_{L,k}\subset V_{L,k-s},
\end{equation} 
and that 
\begin{equation}
\label{eq:orderx}
\partial_y V_{x,k}\subset V_{x,k}\ \ {\rm and} \ \ \partial_x V_{y,k}\subset V_{y,k}.  
\end{equation}

After these preliminaries, suppose now that $B\in  \rm{Ann}$ is homogeneous. Using the previous Lemma \ref{normalform},  $B= U+S_1P+S_2Q$, where $U\in L$ and $S_i\in A_2,\  i=1,2$. Hence, by (1), $U\in  \rm{Ann}$. We will show that $U$ must be $0$.

Let $r$ be the order of $U$. If $r\geq m-1$, then the leading term, with respect to the order filtration, of $U$ will look like $$
\gamma(y\partial_x)^r+\sum_{l\leq m-2 ,l+n=r,n}\alpha_{l,n}(y\partial_{y})^{l}(x\partial_{y})^{n}=\gamma y^r\partial_x^r+ R(x,y)x^{r-(m-2)}\partial_{y}^{r},
$$ where the degree of $R(x,y)$ is equal to $m-2$. 
Consider first valuation with respect to $x$. Evaluate modulo $V_{x,-r+1}$, using  (\ref{eq:orderx}): 
$$
0=U\alpha^\beta=\beta_1(\beta_1-1)...(\beta_1-r+1)\gamma \frac{y^r}{x^r},
$$ and hence $\gamma=0
$
(since $\beta_1\notin \bZ$).
Next let $L=\alpha_i,\ i=2,...,m$. Evaluate again modulo $V_{L,-r+1}$, using  (\ref{Order0}): 
$$
0=U\alpha^\beta= R(x,y)\beta_i(\beta_i-1)...(\beta_i-r+1)x^{r-(m-2)}L^{-r}+ V_{L, -r+1}.
$$ Since $\beta_i\notin \bZ$,  $L$ divides  $R(x,y)$. Repeating this for all $\alpha_i\ i=2,...,m$ implies that $\alpha_2L\alpha_{3}...\alpha_{m}$ divides
$R(x,y)$. This contradicts that $\rm{deg}_{y}R(x,y)\leq m-2.$ Hence
$R(x,y)=0$, and so we are reduced to the second case, when $r\leq m-2$. Clearly we may assume that $r>0$, since otherwise $U$ would be a constant annihilating $\alpha^\beta$ and so $U=0$. 
Then the leading term of $U$ will look like $$
\sum_{k=0}^r\alpha_{k}(y\partial_{y})^{k}(y\partial_{x})^{r-k}.$$ 
Consider now again the different valuations. With respect to $L=x$, and in view of (\ref{eq:orderx}), we have that
$$ 0=U\alpha^\beta=\frac{\alpha_{0}\beta_1(\beta_1-1)...(\beta_1-r+1)y^r}{x^r}\ \ ({\rm Mod} \ V_{x,-r+1}).
$$
This implies that $\alpha_{0}=0$. Now if $L=\alpha_i=c_ix+y, \ i=3,...m$,
$$ U\alpha^\beta=(\sum_{k=0}^r\alpha_{k}c_i^{r-k})\frac{\beta_i(\beta_i-1)...(\beta_i-r+1)y^r}{L^r}+V_{L,-r+1}.$$
 This gives $m-2$ equations  $p(c_i)=\sum_{k=1}^r\alpha_{k}c_i^{r-k}=0$, where $p(x)=\sum_{k=1}^r\alpha_{k}x^{r-k}$, and since the degree of $p$ is  $r-1\leq m-3$, we have $p(x)=0$. This finally implies that $U=0$, and finishes the proof of the Lemma.
 \end{proof}

\subsection{The structure of $A_2/(A_2P+A_2Q)$}\label{ss-63}
We have a surjective map 
$$
A_2/(A_2P+A_2Q)\twoheadrightarrow A_2/{\rm Ann}\subset M_\alpha^\beta,
$$
and we will find the decomposition factors of $M_\alpha^\beta$ by analyzing the first module.
\begin{lemma}
\label{Lem61}Let $\tilde{\beta}=\beta+N,$ where $N\in\bZ^{m}$ and $\alpha^{\tilde{\beta}}=\alpha^N\alpha^{{\beta}}\in M_{\alpha}^{\beta}$. Then $$J=J(\tilde{\beta}):=A_{2}x+A_2P(\tilde{\beta})+A_2Q(\tilde{\beta})=A_{2}x+A_{2}(y\partial_{y}-(\vert\tilde{\beta}\vert+1))+A_{2}(y^{m-2}).$$
\end{lemma}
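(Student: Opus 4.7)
The plan is to prove both inclusions by working modulo the left ideal $A_2 x$, and to use the standing hypothesis of Theorem~\ref{Thm61} exactly once, to invert a single scalar.

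First I would reduce $P(\tilde\beta)$ and $Q(\tilde\beta)$ modulo $A_2 x$. Since $x\partial_x=\partial_x x-1$, one can rewrite $P(\tilde\beta)=\partial_x\cdot x+(y\partial_y-(\vert\tilde\beta\vert+1))$, so that $P(\tilde\beta)\equiv y\partial_y-(\vert\tilde\beta\vert+1)\pmod{A_2 x}$ and, reading the same equation backwards, $y\partial_y-(\vert\tilde\beta\vert+1)\in A_2 x + A_2 P(\tilde\beta)$. For $Q(\tilde\beta)$ I would observe that in our chosen coordinates $\alpha_2=y$ and $\alpha_j=c_j x+y$ for $j\ge 3$, so every $\alpha_j$ with $j\ge 2$ is congruent to $y$ modulo $x$, giving
$$Q(\tilde\beta)\equiv y^{m-1}\partial_y-(\vert\tilde\beta\vert-\tilde\beta_1)\,y^{m-2}=y^{m-2}\bigl(y\partial_y-(\vert\tilde\beta\vert-\tilde\beta_1)\bigr)\pmod{A_2 x}.$$

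The heart of the argument is then a single commutator manipulation using $[y\partial_y,y^{m-2}]=(m-2)y^{m-2}$:
$$y^{m-2}\bigl(y\partial_y-(\vert\tilde\beta\vert-\tilde\beta_1)\bigr)=\bigl(y\partial_y-(\vert\tilde\beta\vert+1)\bigr)\,y^{m-2}+(\tilde\beta_1-(m-3))\,y^{m-2}.$$
Read forwards, this identity together with the reduction of $P(\tilde\beta)$ gives the inclusion $A_2 x+A_2 P(\tilde\beta)+A_2 Q(\tilde\beta)\subseteq A_2 x+A_2(y\partial_y-(\vert\tilde\beta\vert+1))+A_2 y^{m-2}$.

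For the reverse inclusion one reads the same identity the other way: it shows that $(\tilde\beta_1-(m-3))y^{m-2}$ lies in $A_2 x+A_2 P(\tilde\beta)+A_2 Q(\tilde\beta)$. The only real obstacle, and the place where the hypothesis of Theorem~\ref{Thm61} is essential, is the need to divide by the scalar $\tilde\beta_1-(m-3)$. This is legitimate because $\tilde\beta=\beta+N$ with $N\in\bZ^m$ and $\beta_1\in\bC\setminus\bZ$, so $\tilde\beta_1\notin\bZ$ and hence $\tilde\beta_1-(m-3)\neq 0$. Dividing yields $y^{m-2}\in A_2 x+A_2 P(\tilde\beta)+A_2 Q(\tilde\beta)$, which combined with the already observed inclusion $y\partial_y-(\vert\tilde\beta\vert+1)\in A_2 x+A_2 P(\tilde\beta)$ closes the reverse inclusion and completes the proof.
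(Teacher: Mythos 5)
Your reductions of $P(\tilde\beta)$ and $Q(\tilde\beta)$ modulo $A_2x$ are exactly the paper's first step, your commutator identity is a correct identity in $A_2$, and the forward inclusion is fine: both terms on the right-hand side of that identity end in $y^{m-2}$, hence lie in $A_2y^{m-2}$. The gap is in the reverse inclusion. To read the identity backwards and conclude that $(\tilde\beta_1-(m-3))y^{m-2}$ lies in $J=A_2x+A_2P(\tilde\beta)+A_2Q(\tilde\beta)$, you need both remaining terms to lie in $J$. The term $y^{m-2}\bigl(y\partial_y-(\vert\tilde\beta\vert-\tilde\beta_1)\bigr)$ does (it is $Q(\tilde\beta)$ modulo $A_2x$), but $\bigl(y\partial_y-(\vert\tilde\beta\vert+1)\bigr)y^{m-2}$ is a \emph{right} multiple of the element $y\partial_y-(\vert\tilde\beta\vert+1)\in J$, and $J$ is only a left ideal, so this membership is not justified. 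Indeed $\bigl(y\partial_y-(\vert\tilde\beta\vert+1)\bigr)y^{m-2}=y^{m-2}\bigl(y\partial_y-(\vert\tilde\beta\vert+1)\bigr)+(m-2)y^{m-2}$, so modulo elements already known to be in $J$ its membership is equivalent to $y^{m-2}\in J$ --- precisely what you are trying to prove; as written, the reverse inclusion is circular.

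The repair is the paper's computation: subtract from the reduced $Q(\tilde\beta)$ the \emph{left} multiple $y^{m-2}\bigl(y\partial_y-(\vert\tilde\beta\vert+1)\bigr)$, which is unambiguously in $J$ because $y\partial_y-(\vert\tilde\beta\vert+1)=P(\tilde\beta)-\partial_xx\in J$. This gives
$$\Bigl(y^{m-1}\partial_y-(\vert\tilde\beta\vert-\tilde\beta_1)y^{m-2}\Bigr)-y^{m-2}\bigl(y\partial_y-(\vert\tilde\beta\vert+1)\bigr)=(\tilde\beta_1+1)\,y^{m-2}\in J,$$
and one divides by $\tilde\beta_1+1\neq 0$, which is legitimate since $\tilde\beta_1\notin\bZ$. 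Note that the scalar you propose to invert, $\tilde\beta_1-(m-3)$, differs from the correct one by exactly the commutator shift $m-2$; both scalars are nonzero under the standing hypothesis, so the statement itself is unaffected, but only the left-multiple manipulation yields a valid proof of the reverse inclusion.
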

\begin{proof} Recall that $P(\tilde{\beta})=x\partial_{x}+y\partial_{y}-\vert\tilde{\beta}\vert=\partial_{x}x+y\partial_{y}-(\vert\tilde{\beta}\vert+1)$. This shows that $y\partial_{y}-(\vert\tilde{\beta}\vert+1)\in J$.
Secondly, 
$$
Q(\tilde{\beta})=(\prod_{j=2}^{m}\alpha_{j})\partial_{y}-\sum_{i=2}^{m}\tilde \beta_{i}\prod_{j=2,j\neq i}^{m}\alpha_{j}=
Gx+(y^{m-1}\partial_{y}-\sum_{i=2}^{m}\tilde{\beta_{i}}y^{m-2})
$$
 for some $G\in A_{2}$.
  Hence $$J=A_{2}x+A_{2}(y\partial_{y}-(\vert\tilde{\beta}\vert+1))+A_{2}(y^{m-1}\partial_{y}-\sum_{i=2}^{m}\tilde{\beta_{i}}y^{m-2}).$$But $y^{m-1}\partial_{y}-\sum_{i=2}^{m}\tilde{\beta_{i}}y^{m-2}-y^{m-2}(y\partial_{y}-(\vert\tilde{\beta}\vert+1))=(\tilde{\beta_{1}}+1)y^{m-2}\in J$. Since $\tilde{\beta_{1}}+1\neq 0$, by assumption, then $y^{m-2}\in J$, and $$J=A_{2}x+A_{2}(y\partial_{y}-(\vert\tilde{\beta}\vert+1))+A_{2}(y^{m-2}).$$
\end{proof}
\begin{lemma}
\label{Lem62}Let $A_{1}=\bC\langle y,\partial_{y}\rangle$. Let $J=A_{1}(y\partial_{y}-\gamma)+A_{1}y^{k}$ for $k\geq0$. Then we have the following. \\(i) If $\gamma\notin\{-1,...,-k\},$ then $J=A_{1}$.\\ (ii) If $-k\leq\gamma\leq-1$, then $J=A_{1}(y\partial_{y}-\gamma)+A_{1}y^{\vert\gamma\vert}$. Further more $$A_{1}/J \cong \bC[y]_{y}/\bC[y]$$ and is hence simple.
\end{lemma}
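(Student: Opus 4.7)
The whole argument hinges on the commutation identity
\begin{equation*}
\partial_y^j y^j \;=\; (y\partial_y+1)(y\partial_y+2)\cdots(y\partial_y+j),
\end{equation*}
proved by induction from $\partial_y y=y\partial_y+1$ together with the rule $(y\partial_y+c)\,y=y\,(y\partial_y+c+1)$. Since $J$ contains both $A_1y^k$ and $A_1(y\partial_y-\gamma)$, the cyclic vector $\bar 1\in A_1/J$ satisfies $y\partial_y\bar 1=\gamma\bar 1$, and hence each factor $(y\partial_y+i)$ acts on $\bar 1$ by the scalar $\gamma+i$. For (i) this yields $\partial_y^k y^k\cdot\bar 1=(\gamma+1)(\gamma+2)\cdots(\gamma+k)\bar 1$, which must vanish because $\partial_y^k y^k\in A_1y^k\subset J$; when $\gamma\notin\{-1,\dots,-k\}$ the scalar is nonzero, so $\bar 1=0$ and $J=A_1$.

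For (ii), write $\gamma=-s$ with $1\leq s\leq k$ and use the same commutation rule to factor
\begin{equation*}
\partial_y^{k-s}\,y^k\;=\;y^s\,(y\partial_y+s+1)(y\partial_y+s+2)\cdots(y\partial_y+k)\;\in\;A_1y^k\subset J.
\end{equation*}
Acting on $\bar 1$, the remaining scalars collapse to $1\cdot 2\cdots(k-s)=(k-s)!\neq 0$, so $y^s\bar 1=0$, i.e.\ $y^s\in J$. Together with the trivial inclusion $A_1y^k\subset A_1y^s$ (from $y^k=y^{k-s}y^s$), this proves the claimed equality $J=A_1(y\partial_y-\gamma)+A_1y^{\vert\gamma\vert}$.

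To identify $A_1/J$ with $\bC[y]_y/\bC[y]$, I construct the intertwiner $\bar\phi\colon A_1/J\to\bC[y]_y/\bC[y]$, $\overline R\mapsto R\cdot y^{-s}$, which is well-defined since $(y\partial_y-\gamma)y^{-s}=0$ and $y^s\cdot y^{-s}=1\equiv 0$ in the quotient. The target is simple by the one-variable case of Proposition~\ref{Prop11}, and $\bar\phi(\bar 1)=y^{-s}\neq 0$, so $\bar\phi$ is surjective. The main obstacle is injectivity, which I handle by exhibiting a spanning set of $A_1/J$ that maps to a basis of the target. A further application of the commutation identities rewrites every PBW monomial $\partial_y^a y^b\cdot\bar 1$ as a scalar multiple of $y^{b-a}\bar 1$ (when $a\leq b$) or of $\partial_y^{a-b}\bar 1$ (when $a\geq b$); combined with the relation $y^n\bar 1=0$ for $n\geq s$, this shows $\{y^n\bar 1:0\leq n<s\}\cup\{\partial_y^m\bar 1:m\geq 1\}$ spans $A_1/J$. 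Under $\bar\phi$ the first family maps to $y^{-s},y^{-s+1},\dots,y^{-1}$ and the second to nonzero scalar multiples of $y^{-s-1},y^{-s-2},\dots$, covering bijectively the $\bC$-basis $\{y^{-i}:i\geq 1\}$ of $\bC[y]_y/\bC[y]$. Linear independence of the images forces linear independence of the spanning set, so $\bar\phi$ is bijective and hence an $A_1$-isomorphism; simplicity of $A_1/J$ then follows from that of the target.
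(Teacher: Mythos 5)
Your proof is correct and follows essentially the same route as the paper: part (i) and the reduction to $y^{|\gamma|}$ are the paper's commutator computation $\partial_y y^k - y^{k-1}(y\partial_y-\gamma)=(k+\gamma)y^{k-1}$ packaged as a product identity, and your map $\overline R\mapsto R\cdot y^{-s}$ is exactly the paper's evaluation map $\theta(P)=P(\bar y^{\gamma})$, with the same spanning set $\{1,y,\dots,y^{s-1},\partial_y,\partial_y^2,\dots\}$ modulo $J$ used to identify $J$ with $\operatorname{Ker}\theta$.
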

\begin{proof}(i) If $\gamma\notin\{-1,...,-k\},$ then $j+\gamma\neq0$, for $j\in\{1,...,k\}$. Now,
$$\partial_{y}y^{k}-y^{k-1}(y\partial_{y}-\gamma)=(k+\gamma)y^{k-1}\in J.$$ Since $k+\gamma\neq0$, then $y^{k-1}\in J.$ Iterating we find that $1\in J$, since by assumption $k+\gamma\neq0,k-1+\gamma\neq 0,...,1+\gamma\neq0,$ and hence $J=A_{1}.$\vspace{2mm}\\ 
(ii) If $-k\leq\gamma\leq-1$, the same argument still gives that  $J=A_{1}(y\partial_{y}-\gamma)+A_{1}y^{\vert\gamma\vert}$.
Let $\theta :A_{1}\longrightarrow\bC[y]_{y}/\bC[y]$ be the map
defined by $\theta(P)=P(\bar{y}^{\gamma})$. Clearly $J\subset
\rm{Ker}\theta$ and $\theta$ is surjective. Now, \begin{equation*}
 A_{1}=J+\oplus_{i\geq0}\bC\partial_{y}^{i}\oplus\oplus_{j=1}^{\vert\gamma\vert-1}\bC y^{j},
\end{equation*} so $J= \rm{Ker}\theta$. This
concludes the proof.
\end{proof}
\begin{lemma}
\label{Lem63}$$ A_{2}/(A_{2}x+A_2P(\tilde\beta)+A_2Q(\tilde\beta))\cong A_{2}/(A_{2}x+{\rm Ann}(\tilde\beta))\cong A_{2}\alpha^{\tilde{\beta}}/A_{2}x\alpha^{\tilde{\beta}}$$ 
is a non-trivial simple $A_{2}$-module if and only if $-(m-2)\leq\vert\tilde{\beta}\vert+1\leq-1$ and zero otherwise.
\end{lemma}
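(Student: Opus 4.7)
\medskip
\noindent\textbf{Plan.} The right-hand isomorphism $A_2/(A_2x+\rm{Ann}(\tilde\beta))\cong A_2\alpha^{\tilde\beta}/A_2x\alpha^{\tilde\beta}$ is immediate from the standard surjection $A_2\twoheadrightarrow A_2\alpha^{\tilde\beta}$, $T\mapsto T\alpha^{\tilde\beta}$, whose kernel is $\rm{Ann}(\tilde\beta)$, followed by quotienting both sides by $A_2x$. The real content lies in analyzing the middle quotient and in identifying it with the left one.

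\medskip
For the middle module, I first apply Lemma~\ref{Lem61} to rewrite the defining left ideal as $A_2x + A_2(y\partial_y-\gamma)+A_2y^{m-2}$, where $\gamma:=|\tilde\beta|+1$. I then use the algebra decomposition $A_2\cong A_1^x\otimes_{\bC} A_1^y$, with $A_1^x:=\bC\langle x,\partial_x\rangle$ and $A_1^y:=\bC\langle y,\partial_y\rangle$. Under this identification, $A_2x$ becomes $A_1^xx\otimes A_1^y$, while the two remaining generators lie entirely in the $A_1^y$-factor, so the quotient splits as an external product
\[
A_2/\bigl(A_2x + A_2(y\partial_y-\gamma)+A_2y^{m-2}\bigr)\;\cong\;(A_1^x/A_1^xx)\,\widehat{\otimes}\,\bigl(A_1^y/(A_1^y(y\partial_y-\gamma)+A_1^yy^{m-2})\bigr).
\]
The first tensor factor is the standard simple ``$\delta$-at-$0$'' $A_1^x$-module, and by Lemma~\ref{Lem62} the second is zero unless $\gamma\in\{-1,\ldots,-(m-2)\}$, in which case it equals the simple module $\bC[y]_y/\bC[y]$. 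Lemma~\ref{Prop14} (the external product of simple Weyl-algebra modules is simple) then shows that the middle quotient is zero or simple, exactly according to the criterion stated in the lemma.

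\medskip
To finish, I need the left-to-middle identification $A_2/(A_2x+A_2P+A_2Q)\cong A_2/(A_2x+\rm{Ann}(\tilde\beta))$. Since $P(\tilde\beta),Q(\tilde\beta)\in\rm{Ann}(\tilde\beta)$, there is a canonical surjection from left to right. In the zero case there is nothing to prove. In the simple case, surjectivity plus simplicity of the source turns the map into an isomorphism as soon as the target is nonzero, i.e.\ as soon as $1\notin A_2x+\rm{Ann}(\tilde\beta)$. This is the step I expect to be the main obstacle, and I would handle it using Lemma~\ref{Prop61}(2): grade $A_2$ by $|x|=|y|=1$, $|\partial_x|=|\partial_y|=-1$. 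Since $\alpha^{\tilde\beta}$ is homogeneous of weight $|\tilde\beta|$, $\rm{Ann}(\tilde\beta)$ is a graded ideal. Any relation $1=Sx+R$ with $R\in\rm{Ann}(\tilde\beta)$ then, on extracting weight-$0$ parts, yields $1=S_{-1}x+R_0$ with $R_0\in\rm{Ann}_0(\tilde\beta)\subseteq A_2P+A_2Q$ by Lemma~\ref{Prop61}(2); this would force $1\in A_2x+A_2P+A_2Q$, contradicting the nonvanishing of the left-hand module established in the previous paragraph.
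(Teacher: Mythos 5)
Your proposal is correct and takes essentially the same route as the paper's own proof: the obvious right-hand isomorphism, the rewriting of $A_2x+A_2P(\tilde\beta)+A_2Q(\tilde\beta)$ via Lemma~\ref{Lem61} followed by the external-product splitting and Lemmas~\ref{Lem62} and~\ref{Prop14} for the zero/simple criterion, and then the weight-$0$ extraction together with ${\rm Ann}_0\subset A_2P+A_2Q$ (Lemma~\ref{Prop61}(2), which is in fact the more accurate citation than the paper's reference to Lemma~\ref{normalform}) to upgrade the canonical surjection to an isomorphism. The only slip is terminological: the module you analyze in your second paragraph is the left-hand quotient $A_2/(A_2x+A_2P+A_2Q)$, not the middle one, but this does not affect the logic.
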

\begin{proof}The last isomorphism is obvious. By Lemma~\ref{Lem61}, $$A_{2}/(A_{2}x+A_2P(\tilde\beta)+A_2Q(\tilde\beta))\cong A_{2}/(A_{2}x+A_{2}(y\partial_{y}-(\vert\tilde{\beta}\vert+1))+A_{2}(y^{m-2})).$$ So this module is the external product 
$$(\bC\langle x,\partial_{x}\rangle/\bC\langle x,\partial_{x}\rangle x)\widehat{\otimes}(\bC\langle y,\partial_{y}\rangle/\bC\langle y,\partial_{y}\rangle\langle y\partial_{y}-(\vert\tilde{\beta}\vert+1),y^{m-2}\rangle).$$ Hence the result on simplicity follows by Lemma~\ref{Lem62} (ii) and Lemma~\ref{Prop14}. It remains to prove the first isomorphism. There is a canonical surjection 
$$
\theta: A_2/(A_2x+A_2P(\tilde\beta)+A_2Q(\tilde\beta))\to A_2/{A_2x+{\rm Ann}(\tilde\beta)}.
$$
If the second module is zero, then there is $B\in A_2,\ C\in {\rm Ann}(\tilde\beta)$ such that $1=Bx+C\iff C=1-Bx\in {\rm Ann}(\tilde\beta)$. Take the homogeneous weight $0$ part: $1=B_{-1}x+C_0$. Now, by Lemma \ref{normalform}, $C_0\in A_2P(\tilde\beta)+A_2Q(\tilde\beta)$ and so $1\in A_2x+A_2P(\tilde\beta)+A_2Q(\tilde\beta)$. Hence the first module is zero. Since it is always simple, this proves that the surjection $\theta $ is always an isomorphism and finishes the proof of the Lemma.

\end{proof}
\begin{lemma}\label{Lem64}
(i) There exists $N_{1}\in \bN^m$ such that $\alpha^{\beta-N_{1}}$ generates $M_{\alpha}^{\beta}$.\\
(ii) There exists $N_{2}\in \bN^m$such that
$A_{2}\alpha^{\beta+N_{3}}$ is a simple submodule if $N_{3}\in
N_{2}+\bN^{m}$.
\end{lemma}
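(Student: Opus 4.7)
Plan. Both parts of the lemma are stabilization statements for chains of cyclic submodules $A_2\alpha^{\beta\pm N}\subset M_\alpha^\beta$, and both rely on the finite length of $M_\alpha^\beta$, which is holonomic by Proposition~\ref{Prop12}(iii) and hence both noetherian and artinian. Part (i) is essentially noetherian stabilization; part (ii) requires artinian stabilization together with a simplicity argument based on $\bC[x]_\alpha$-torsion-freeness.

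For (i), the family $\{A_2\alpha^{\beta-N}\}_{N\in\bN^m}$ is nested upward under componentwise order: if $N\leq N'$ then $\alpha^{\beta-N}=\alpha^{N'-N}\alpha^{\beta-N'}\in A_2\alpha^{\beta-N'}$, since $\alpha^{N'-N}\in\bC[x]\subset A_2$. Any element $(p/\alpha^r)\alpha^\beta$ of $M_\alpha^\beta$ is $p\alpha^{\beta-(r,\ldots,r)}\in A_2\alpha^{\beta-(r,\ldots,r)}$, so the directed union is $M_\alpha^\beta$. Noetherianity forces stabilization at some $N_1$, yielding the required cyclic generator.

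For (ii), dually, $\{A_2\alpha^{\beta+N}\}_{N\in\bN^m}$ is a descending family (since $\alpha^{\beta+N+e_i}=\alpha_i\alpha^{\beta+N}\in A_2\alpha^{\beta+N}$), and artinianity limits it to finitely many distinct values, so there is $N_2$ with $A_2\alpha^{\beta+N_3}=L$ a fixed submodule for all $N_3\in N_2+\bN^m$. To show $L$ is simple, suppose $V\subsetneq L$ is a maximal proper submodule with simple quotient $T=L/V$, and set $\tilde\beta=\beta+N_3$. By stabilization $A_2\alpha^{\tilde\beta+ke_i}=L$ for all $i$ and all $k\geq 0$, so $\alpha_i^k[\alpha^{\tilde\beta}]=[\alpha^{\tilde\beta+ke_i}]$ generates $T$ and in particular is nonzero. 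Hence no $\alpha_i$ acts nilpotently on $T$, so $\supp T\not\subset V(\alpha_i)$ for any $i$, forcing $\supp T=\bC^2$. Now restrict the surjection $L\twoheadrightarrow T$ to the open complement $X=\bC^2\setminus V(\alpha)$: since $L\ni\alpha^{\tilde\beta}=\alpha^{N_3}\alpha^\beta$ and $\alpha^{N_3}$ is a $\bC[x]_\alpha$-unit, $L|_X=M_\alpha^\beta|_X$; and $T|_X$ is a nonzero quotient of the rank-one connection $M_\alpha^\beta|_X$, which is simple as a $D_X$-module, so $T|_X=M_\alpha^\beta|_X$ as well. After rescaling, the restricted map is an isomorphism, so $V|_X=0$, i.e., $V$ is supported on $V(\alpha)$. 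But $V\subset M_\alpha^\beta$, a free rank-one $\bC[x]_\alpha$-module with no $\alpha$-torsion, so any submodule supported on $V(\alpha)$ is zero. Hence $V=0$ and $L$ is simple.

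The main obstacle I anticipate is the support argument: rigorously deducing $\supp T=\bC^2$ from the non-nilpotence of each $\alpha_i$ on $T$, and the identification of $T|_X$ with $M_\alpha^\beta|_X$ via simplicity of a rank-one connection on a connected open variety. Both are standard facts about simple holonomic D-modules with singularities along a hyperplane arrangement, but they require careful verification. Once in place, the torsion-freeness of $M_\alpha^\beta$ over $\bC[x]_\alpha$ eliminates the residual kernel $V$ cleanly.
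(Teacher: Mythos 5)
Your proof is correct, and for part (i) it is essentially the paper's argument (the paper runs the ascending chain along $A_2\alpha^{\beta-ne}$, $e=(1,\dots,1)$, and invokes Noetherianity of the holonomic module $M_\alpha^\beta$ exactly as you do). For part (ii) your route differs in the simplicity step. The paper argues ``from below'': by finite length $M_\alpha^\beta$ contains a simple submodule $M_1$; multiplying a generator of $M_1$ by $\alpha^{ne}$ places it inside $L=A_2\alpha^{\beta+n_2e}$, and since $M_\alpha^\beta$ has rank one over $\bC(x,y)$ and is $\alpha$-torsion free, the quotient $L/M_1$ is killed elementwise by powers of $\alpha$, so some $\alpha^{\beta+n_4e}\in M_1$ and the stabilized chain forces $L=M_1$, simple. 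You instead argue ``from above'': take a simple quotient $T=L/V$, show $T$ survives restriction to $X=\bC^2\setminus V(\alpha)$, use that $M_\alpha^\beta|_X$ is a simple rank-one connection over $D_X$ to get $V|_X=0$, and kill $V$ by $\alpha$-torsion-freeness. This is valid, but it imports an extra ingredient (simplicity of the restricted connection, which rests on the characteristic-variety/coherence argument) that the paper's submodule argument avoids; in exchange it needs no existence of a simple submodule and pinpoints the support of $L$ directly. One small patch: from $\supp T\not\subset V(\alpha_i)$ for each $i$ separately you cannot immediately conclude $\supp T\not\subset V(\alpha)=\cup_i V(\alpha_i)$; either invoke irreducibility of the support of the simple module $T$, or (simpler, and entirely within your setup) run the same generator argument with $\alpha$ itself: $\alpha^k\alpha^{\tilde\beta}=\alpha^{\tilde\beta+ke}$ still generates $L$ because $N_3+ke\in N_2+\bN^m$, so its class in the cyclic module $T$ is nonzero for all $k$, whence $T|_X\neq 0$ directly.
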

\begin{proof} (i) follows directly from the fact that $M=M_{\alpha}^{\beta}$ is a holonomic module and hence Noetherian, see \cite{CSC}. Let $e=(1,...1)\in \bN^m$. By the Noetherian property the ascending sequence of submodules $A_2\alpha^{\beta-ne}, \  n=1,2,...$ stabilizes at some $n=n_1$. But, for large enough $n$ any element in  $M$ is contained in $A_2\alpha^{\beta-ne}$. This holds in particular for a finite set of generators of $M$, and so $M$ is generated by $\alpha^{\beta-n_1e}$. 

For (ii), note similarily that the descending sequence of submodules $A_2\alpha^{\beta+ne}, \  n=1,2,...$ stabilizes if $n$ is larger than some $n_2$. As a further consequence of holonomicity, $M$ contains a simple submodule $M_1$. If $f$ is a generator of $M_1$, then also $\alpha^{ne}f,\ n\geq 0$ is a generator, and if we then choose $n$ big enough we may assume that $f\in A_2\alpha^{\beta+n_2e}$. Hence $M_1\subset A_2\alpha^{\beta+n_2e}$. Furthermore, all decomposition factors of $M$ have support on hyperplane intersections, and $M$ has rank 1 as a $\bC(x,y)-$module, so any element  in $ A_2\alpha^{\beta+n_2e}/M_1$ is annihilated by a large enough power of $\alpha$. In particular, there is $n_4\geq n_2$ such that $\alpha^{\beta+n_4e}\in M_1$, and it follows that $A_2\alpha^{\beta+n_2e}=A_2\alpha^{\beta+n_4e} =M_1$.
\end{proof}

\subsection{Proof of Theorem~\ref{Thm61}} We are now  in a position to prove
Theorem~\ref{Thm61}.   Assume first that $\vert{\tilde{\beta}}\vert \in\bC\setminus\bZ$. By Lemma~\ref{Lem63}, 
then $A_{2}\alpha^{\tilde{\beta}}/A_{2}x\alpha^{\tilde{\beta}}=0$ and hence $A_{2}\alpha^{\tilde{\beta}}=A_{2}x\alpha^{\tilde{\beta}}=...=A_{2}x^{r}\alpha^{\tilde{\beta_{1}}}$
 for any $r\in\bZ$ and this is true exchanging $x$ for any of the other linear forms.
 Then by Lemma~\ref{Lem64}, $A_{2}\alpha^{\beta+N_{3}}$=
 $A_{2}\alpha^{\beta+N_{1}}$=$M_{\alpha}^{\beta}$.
 Therefore $M_{\alpha}^{\beta}$ is simple.

Assume then that $\vert{\tilde{\beta}}\vert \in\bZ$. Again using the notation of the preceding Lemma, put
$\tilde{\beta}=\beta+N_{1}$, and consider $A_{2}\alpha^{\beta+N_{1}}$. Since, if $\alpha^{\beta+N}$ generates
$M_{\alpha}^{\beta}$, also $x^{-n}\alpha^{\beta+N}$ generates if
$n\geq0$, we may assume $\vert\tilde{\beta}\vert\leq-(m-1)$. By
Lemma~\ref{Lem63}, if $\vert\tilde{\beta}\vert$ is not one of
$-(m-1),...,-2$, we have that
$A_{2}\tilde{\beta}/A_{2}x\tilde{\beta}=0$. Hence
$A_{2}\alpha^{\tilde{\beta}}=A_{2}x\alpha^{\tilde{\beta}}=...=A_{2}\alpha^{\tilde{\beta_{1}}},$
where $\alpha^{\tilde{\beta_{1}}}=x^{r}\alpha^{\tilde{\beta}}$ such
that $\vert\tilde{\beta_{1}}\vert=-(m-1).$ Then by Lemma~\ref{Lem63}
$$A_{2}\alpha^{\tilde{\beta_{1}}}\supset
A_{2}x\alpha^{\tilde{\beta_{1}}}\supset...\supset
A_{2}x^{m-2}\alpha^{\tilde{\beta_{1}}},$$is a chain of strict
submodules such that each quotient is simple and has support at
$(0,0)$. The last submodule,
$A_{2}x^{m-2}\alpha^{\tilde{\beta_{1}}}$, has the property (by again
applying Lemma~\ref{Lem63} to each $\alpha_i$),
%to $A_{2}\alpha^{N}x^{m-2}\alpha^{\beta}$ for
%$N\in\bN^{m}$ in succession
 that it equals
$A_{2}\alpha^{N}x^{m-2}\alpha^{\beta}$ for all $N\in\bN^{m}$, and
hence by Lemma~\ref{Lem64} is simple. \\ 
Hence  $M_{\alpha}^{\beta}$
has $m-2$ decomposition factors with support at the origin, and one
with support on $\bC^{2}$. This concludes the
 proof.

 \section{ The general plane case}
In this section, we are going to consider the remaining case when $1\leq k<m$ of the
$\beta's$
 are in $\bZ$ and the rest are in $\bC\setminus\bZ$. 
  %We continue to assume that $\vert \beta\vert\in \bZ.$(The case when it is not???). 
 Let then $\beta_{1},...,\beta_k \in\bZ$,  so they may be taken to be $0$, by Lemma \ref{Prop12}. Define
 $\tilde\beta:=(\beta_{k+1},...,\beta_{m})\in(\bC\setminus\bZ)^{m-k}$, and set $\tilde \alpha=\alpha_{k+1}...\alpha_{m}$.  We then want to study the module 
 $$
 M_\alpha^\beta=\bC[x,y]_{\alpha}{\tilde\alpha}^{\tilde \beta}.$$
 First, by section \ref{integer}
 the module $\bC[x,y]_{\alpha_1...\alpha_r}$ has a filtration:
 $$ R_0=\bC[x,y]\subset R_1\subset R_2=\bC[x,y]_{\alpha_1...\alpha_k}.$$
 (If $k=1$, then  $R_1=R_2$).
  By 
  localization and multiplication by $\tilde{\alpha}^{\tilde{\beta}}$ this induces a filtration by $A_2$-modules
 \begin{equation} \label{eq:dec} \bC[x,y]_{\tilde \alpha}\tilde{\alpha}^{\tilde{\beta}}\subset (R_1)_{\tilde \alpha}\tilde{\alpha}^{\tilde{\beta}}= (R_2)_{\tilde \alpha}\tilde{\alpha}^{\tilde{\beta}}=\bC[x,y]_{\alpha}\tilde{\alpha}^{\tilde{\beta}},
\end{equation}
 where the equality is a consequence of the fact that $R_2/R_1$ has support in the complement of $\tilde \alpha\neq 0$, and so vanishes when localised. By (\ref{eq: L}) the quotient 
 $$
 \bC[x,y]_{\alpha}\tilde{\alpha}^{\tilde{\beta}}/\bC[x,y]_{\tilde \alpha}\tilde{\alpha}^{\tilde{\beta}}=\bigoplus_{i=1}^k (L_{\alpha_i})_{\tilde \alpha}\tilde{\alpha}^{\tilde{\beta}},
  $$
as $A_2$-modules. We can describe the factors $K_i:=(L_{\alpha_i})_{\tilde \alpha}\tilde{\alpha}^{\tilde{\beta}}$ more precisely. Denote by $i_{H_i}$ the inclusion $H_i=V(\alpha_i)\to \bC^2$. Since $K_i$ has support on $H_i$, it is equal to ${i_{H_i}}_*(\rm{Ker_{\rm{K_i}}}\alpha_i)$, by Kashiwara's  theorem.
Note that $\alpha^c$ is a parameter on $H_i.$ The restriction of a form $\alpha_j, \ j=k+1,...,m$ to $H_i$ is a non-zero multiple of  $\alpha^c$, and hence intuitively the restriction of $\tilde \alpha^{\tilde\beta}$ is a multiple of $(\alpha^c)^{\beta_{k+1}+...+\beta_m}$.

\begin{lemma}\label{Thm55}Let $i_H:H=V(\alpha_i)\subset \bC^2$, and put $t=\alpha_i^c$, and let $ \beta_H=\beta_{k+1}+....+\beta_m$. Then \\
(i) $\rm{Ker_{K_i}\alpha_{i}}\cong\bC[t]_{t}t^{\beta_H}={M_{t}^{\beta_H}}$ as
 $\bC\langle t,\partial_{t}\rangle$-module.\\
 (ii) $\rm{K_{i}={i_{H}}_{*}M_{t}^{\beta_H}}$,
 where $\rm{i_{H}}$ is the inclusion $\bC\cong
 \rm{H}\subset\bC^{2}$. \end{lemma}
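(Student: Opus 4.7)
The plan is to prove (i) directly and then obtain (ii) from Kashiwara's equivalence: $K_i$ is holonomic with set-theoretic support contained in $H=H_i$, so it is isomorphic to ${i_H}_*(\mathrm{Ker}_{K_i}\alpha_i)$, where the kernel naturally carries a $\bC\langle t,\partial_t\rangle$-module structure. Identifying this kernel with $M_t^{\beta_H}$ then gives both parts.

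Choose coordinates so that $\alpha_i=x$ and $\alpha_i^c=y=t$; then each $\alpha_l$ with $l\neq i$ takes the form $\alpha_l=c_l x+d_l y$ with $d_l\neq 0$. The explicit description of $L_{\alpha_i}$ inside $R_1/R_0$ from (\ref{eq:L2}) gives $\mathrm{Ker}_{L_{\alpha_i}}\,x=\bigoplus_{k\geq 0}\bC\,\frac{y^k}{x}$, and a short calculation using $(c_l x+d_l y)\cdot\frac{y^k}{x}\equiv d_l\frac{y^{k+1}}{x}\pmod{\bC[x,y]}$ shows that each $\alpha_l$ with $l\geq k+1$ acts on this kernel as multiplication by $d_l t$. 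Since localization at $\tilde\alpha$ is exact and commutes with taking the kernel of $x$ (because $x$ and $\tilde\alpha$ are coprime in $\bC[x,y]$), the $\bC[t]$-module $\mathrm{Ker}_{(L_{\alpha_i})_{\tilde\alpha}}\,x$ is the localization of $\bC[t]$ at $t$, namely $\bC[t]_t$.

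The central step is to check that the twist by $\tilde\alpha^{\tilde\beta}$ upgrades this $\bC[t]$-module to a $\bC\langle t,\partial_t\rangle$-module isomorphic to $M_t^{\beta_H}$. The key identity, coming from $d_l y=\alpha_l-c_l x$, is
$$\frac{d_l y}{x\alpha_l}=\frac{1}{x}-\frac{c_l}{\alpha_l}\qquad(l\geq k+1).$$
Applying the derivation rule of Definition~\ref{Def11} to $\frac{y^k}{x}\tilde\alpha^{\tilde\beta}\in K_i$ yields
$$\partial_y\Bigl(\frac{y^k}{x}\tilde\alpha^{\tilde\beta}\Bigr)=\frac{k y^{k-1}}{x}\tilde\alpha^{\tilde\beta}+\sum_{l=k+1}^{m}\beta_l\Bigl(\frac{y^{k-1}}{x}-\frac{c_l y^{k-1}}{\alpha_l}\Bigr)\tilde\alpha^{\tilde\beta},$$
and each term $\frac{c_l y^{k-1}}{\alpha_l}\tilde\alpha^{\tilde\beta}$ lies in $\bC[x,y]_{\tilde\alpha}\tilde\alpha^{\tilde\beta}$ (since $\alpha_l$ divides $\tilde\alpha$) and hence vanishes in $K_i$. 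One is left with $\partial_y(\frac{y^k}{x}\tilde\alpha^{\tilde\beta})=(k+\beta_H)\frac{y^{k-1}}{x}\tilde\alpha^{\tilde\beta}$, which matches $\partial_t(t^{k+\beta_H})=(k+\beta_H)t^{k+\beta_H-1}$. The $\bC$-linear bijection $\frac{y^k}{x}\tilde\alpha^{\tilde\beta}\leftrightarrow t^{k+\beta_H}$ is therefore a $\bC\langle t,\partial_t\rangle$-module isomorphism on the non-negative part, and the same identity applied in the localized setting extends it uniformly to all $k\in\bZ$, giving $\mathrm{Ker}_{K_i}\alpha_i\cong M_t^{\beta_H}$. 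The main obstacle will be this extension to negative $k$ and confirming that no additional kernel elements arise from the localization, both of which follow from exactness of localization and the coprimality of $x$ with $\tilde\alpha$.
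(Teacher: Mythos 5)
Your proposal is correct and follows essentially the same route as the paper's proof: pass to coordinates with $\alpha_i=x$, $t=y$, use the description \eqref{eq:L2} together with exactness of localization to identify $\mathrm{Ker}_{K_i}x$ with $\bC[t]_t$ (the paper does this via the explicit generators $\frac{y^j}{x}\tilde\alpha^{\tilde\beta}$ and $\frac{1}{x\alpha_s^j}\tilde\alpha^{\tilde\beta}$ and the relations $\frac{c_s}{x\alpha_s}=\frac{c_r}{x\alpha_r}$, which your observation that each $\alpha_l$ acts on the kernel as $d_l t$ encapsulates), then check the twisted $\partial_y$-action matches $\partial_t$ on $M_t^{\beta_H}$, and deduce (ii) from Kashiwara's theorem exactly as in the text. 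Your extension to negative powers of $t$ is indeed forced (invertibility of $t$ on the kernel plus the Leibniz rule determine $\partial_y$ on $t^{-j}v_0$ from its values on nonnegative powers), so the slight brevity there is not a gap.
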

 \begin{proof}
It suffices to prove (i). We may without loss of generality assume that a basis is choosen so that $\alpha_i=x$ and $t=\alpha^c_i=y$, where the decomposition $\alpha_s=b_sx+c_sy, \ s=k+1,...,m$  has the property that $c_s\neq 0, \ s=k+1,...,m$(If some $c_s=0$, then $x$ and $\alpha_s$ are linearily dependent.). Since kernels localize well, $ \rm{Ker_{K_i}}= ( \rm{Ker_{L_i}}x)_{\tilde \alpha}\tilde{\alpha}^{\tilde{\beta}} $. Hence, by the description of $\rm{Im (L_{\alpha_i})}$ in (\ref{eq:L2}),
vector space generators of   $ \rm{Ker_{K_i}}x$ are given by
 $$
 \rm{Ker_{\rm{K_i}}}x=\bigoplus_{j\geq 0}\bC\frac{y^j}{x}\tilde{\alpha}^{\tilde{\beta}}+ \sum_{j\in\bZ, s=k+1,...,m}\bC\frac{1}{x\alpha_s^{j}}\tilde{\alpha}^{\tilde{\beta}}$$
 We may eliminate some generators, letting $\alpha_s=b_sx+c_sy$, and using that 
 $$
 \frac{c_s}{x\alpha_{s}}-\frac{c_r}{x\alpha_{r}}=0,
 $$ 
 (in $K_i$) and by an inductive argument getting $$
 \frac{c_s}{x\alpha^j_{s}}-\frac{c_r}{x\alpha^j_{r}}=0.
 $$
 This gives 
\begin{equation}
\label{eq:kerK}\rm{Ker_{\rm{K_i}}}x=\bigoplus_{j\geq 0}\bC\frac{y^j}{x}\tilde{\alpha}^{\tilde{\beta}}\oplus \bigoplus_{j\in\bZ}\bC\frac{1}{x\alpha_{k+1}^{j}}\tilde{\alpha}^{\tilde{\beta}}.
\end{equation}

 Let $\theta: \  \bC[x,y]\to \bC[y]$ be the surjection corresponding to the injection  $i_H:H=V(\alpha_i)\subset \bC^2$. Since $\alpha_s=b_sx+c_sy$ we have that $\theta(\alpha_s)=c_sy, \ c_s\in \bC$.  Choose a branch of the logarithm, so that $
 c=\prod_{s=k}^{m}c_{s}^{\beta_{s}}$ is defined.
 Define a $\bC [y]$- homomorphism  
 $$
 \theta:\rm{Ker_{\rm{K}}}x\longrightarrow\bC[y]_{y}y^{\beta_H}=M_y^{\beta_H},
 $$
 by $\theta(\frac{y^j}{x}\tilde{\alpha}^{\tilde{\beta}})=cy^jy^{\beta_{H}}$ and 
 $$\theta(\frac{1}{x\alpha_{s}^{j}}\tilde{\alpha}^{\tilde{\beta}})=c(c_sy)^{-j}y^{\beta_{H}}.$$
 
 The identity
 $$\frac{y}{x\alpha^j_s}=\frac{1}{c_s}\frac{b_sx+c_sy}{x\alpha^j_s}=\frac{1}{c_s}\frac{1}{x\alpha^{j-1}_s},
 $$
in $K_i$, implies that  $\theta(\frac{y}{x\alpha^j_s})=y\theta(\frac{1}{x\alpha^j_s})$. Furthermore
the identity 
$$\theta(\partial_y(\frac{1}{x}{\tilde{\alpha}^{\tilde{\beta}}}))=\theta(\frac{1}{x}\sum_{s=k+1}^m\frac{\beta_sc_s}{\alpha_s}{\tilde{\alpha}^{\tilde{\beta}}})=c(\sum_{s=k+1}^m\frac{\beta_sc_s}{c_sy}
 y^\beta_H)=\partial_y(\theta(\frac{1}{x}{\tilde{\alpha}^{\tilde{\beta}}})),$$
 shows part of the fact that $\theta$ commutes with the action of $\partial_y$ and the rest follows similarily.
 Clearly $\theta$ is onto, and injective by the description (\ref{eq:kerK}). This finishes the proof.
  \end{proof}
 
 This finally enables us to complete the proof of a more precise version of Theorem~\ref{MT}.
   .
 \begin{thm}Assume that $\beta_{1},..., \beta_k\in\bZ$ and
 $\beta_{k+1},...,\beta_{m}\in\bC\setminus\bZ$.\\
 (i) 
 If $\sum_{i=1}^{m}\beta_{i}\in\bC\setminus\bZ$, then
 $c(\rm{M_{\alpha}^{\beta}})=k+1$. There is one decomposition factor with support on the whole space, $1$  with support on each $H_s,\ s=1,...,k$, and none with support at the origin. \\
  (ii) 
  If $\sum_{i=1}^{m}\beta_{i}\in\bZ$, then $m-k\geq 2$ and
 $c(\rm{M_{\alpha}^{\beta})}=m+k-1$.There is one decomposition factor with support on the whole space, one with support on each $H_s,\ s=1,...,k$, and $m-k-2$ with support at the origin. 
 \end{thm}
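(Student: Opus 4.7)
The plan is to exploit the short exact sequence of $A_2$-modules implicit in (\ref{eq:dec}),
\begin{equation*}
0 \to \bC[x,y]_{\tilde\alpha}\tilde\alpha^{\tilde\beta} \to M_\alpha^\beta \to \bigoplus_{i=1}^{k} K_i \to 0,
\end{equation*}
so that Lemma~\ref{Prop16} yields $c(M_\alpha^\beta) = c(\bC[x,y]_{\tilde\alpha}\tilde\alpha^{\tilde\beta}) + \sum_{i=1}^k c(K_i)$, with the decomposition factors splitting additively between the submodule and the summands $K_i$. By Proposition~\ref{Prop12} I may take $\beta_1=\cdots=\beta_k=0$, so $\sum_{i=1}^m\beta_i = \vert\tilde\beta\vert$, and the dichotomy of the theorem matches exactly whether $\vert\tilde\beta\vert\in\bZ$ or not.

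The submodule $\bC[x,y]_{\tilde\alpha}\tilde\alpha^{\tilde\beta}$ involves only the $m-k$ forms $\alpha_{k+1},\dots,\alpha_m$ with non-integer exponents, so Theorem~\ref{Thm61} (or Proposition~\ref{Thm42} in the normal crossings case $m-k\leq 2$) applies and gives: one factor supported on $\bC^2$ when $\vert\tilde\beta\vert\notin\bZ$; and $m-k-1$ factors in total when $\vert\tilde\beta\vert\in\bZ$, one supported on $\bC^2$ and the remaining $m-k-2$ supported at the origin (the latter count being understood as zero when $m-k=2$).

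For each summand $K_i$ of the quotient, Lemma~\ref{Thm55} identifies $K_i\cong{i_{H_i}}_*M_t^{\vert\tilde\beta\vert}$. Because pushforward along a closed embedding is exact and preserves composition length (Kashiwara's equivalence), Proposition~\ref{Prop11} transfers directly: $K_i$ is simple with support $H_i$ when $\vert\tilde\beta\vert\notin\bZ$, and otherwise has length $2$, with composition factors ${i_{H_i}}_*\Oc_{H_i}$ (supported on $H_i$) and ${i_{H_i}}_*(\bC[t]_t/\bC[t])$ (supported at the origin of $\bC^2$).

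Summing the two contributions, in case (i) I obtain $1 + k\cdot 1 = k+1$, with one $\bC^2$-factor and one $H_s$-factor for each $s=1,\dots,k$. In case (ii), first noting that $m-k\geq 2$ is forced (since $m-k=1$ would give $\vert\tilde\beta\vert = \beta_m\notin\bZ$, contradicting the hypothesis), the total becomes $(m-k-1)+2k = m+k-1$, with one $\bC^2$-factor, $k$ factors on the hyperplanes $H_1,\ldots,H_k$, and the remaining factors supported at the origin. I do not foresee any serious obstacle: the essential arguments live in Theorem~\ref{Thm61} and Lemma~\ref{Thm55}, and the only delicate point is using Kashiwara's characterization by support to confirm that distinct composition factors arising on the two sides of the exact sequence are not accidentally identified.
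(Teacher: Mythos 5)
Your proof is correct and follows essentially the same route as the paper: the filtration (\ref{eq:dec}) together with Lemma~\ref{Prop16} for additivity of $c(-)$, Lemma~\ref{Thm55} plus Kashiwara's equivalence to reduce each quotient factor $K_i$ to $M_t^{\vert\tilde\beta\vert}$ and Proposition~\ref{Prop11}, and Theorem~\ref{Thm61} (with Proposition~\ref{Thm42} covering $m-k=2$) for the submodule $\bC[x,y]_{\tilde\alpha}\tilde\alpha^{\tilde\beta}$. Your tally of origin-supported factors in case (ii) --- $m-2$ in total, since each $K_i$ contributes one in addition to the $m-k-2$ from the submodule --- agrees with the paper's own proof, and in fact corrects the figure $m-k-2$ appearing in the statement, which counts only the submodule's contribution.
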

 \begin{proof}By (\ref{eq:dec}) and Lemma \ref{Prop16}
 $$c(\rm{M}_{\alpha}^{\beta})=c(\bC[x,y]_{\tilde{\alpha}}{\tilde{\alpha}}^{\tilde{\beta}})+c(\rm{M}_{\alpha}^{\beta}/\bC[x,y]_{\tilde{\alpha}}{\tilde{\alpha}}^{\tilde{\beta}})=c(\bC[x,y]_{\tilde{\alpha}}{\tilde{\alpha}}^{\tilde{\beta}})+\sum_{s=1}^kc(\bC[y]_{y}y^{\beta_{H}}).$$
 
 Clearly $\beta_{H}\in\bZ\iff \vert \beta\vert =\sum_{i=1}^m\beta_i\in\bZ$, so $c(\bC[y]_{y}y^{\beta_{H}})=2$ or $1$, according to if $\sum_{i=1}^m\beta_i\in\bZ$, or not, by the example in the introduction. Hence the contribution of the last part of the sum is that there is always exactly one decomposition factor with support on each $H_i,\ i=1,...,k$. An additional $k$ with support at the origin exist if  $\vert \beta\vert \in\bZ$, while if $\vert \beta\vert \in\bC\setminus \bZ$ there are none. 
 
 On the other hand, by the previous section,  $c(\bC[x,y]_{\tilde{\alpha}}{\tilde{\alpha}}^{\tilde{\beta}})=(m-k-1)$ or $1$, according to if $\sum_{s=k}^m\beta_s\in\bZ\iff \sum_{i=1}^m\beta_i\in\bZ$, or not. There is always one decomposition factor with support on $\bC^2$, and $m-k-2$ with support in the origin if  $\sum_{i=1}^m\beta_i\in\bZ$.
 Combining these descriptions, if $\vert \beta\vert \in\bZ$ there are $2k+(m-k-1)=m+k-1$ composition factors, and if $\sum_{i=1}^m\beta_i\in\bC\setminus\bZ$, there are $k+1$.
\end{proof}

 \section*{Acknowledgements}
We would like to thank Jan-Erik Bj\"ork and Rolf K\"allstr\"om
for their interest in and crucial contributions to this note.

\end{document}